
\documentclass[10pt, a4paper]{article}
\usepackage{graphicx,latexsym, amsmath,amsfonts}
\usepackage{amsthm}
\usepackage{enumerate}
\usepackage{float} 

\setlength{\oddsidemargin}{0in}  

\setlength{\textwidth}{6.27in}   

\setlength{\textheight}{9.6in}
\setlength{\topmargin}{-0.8in}

\parskip 12pt    

\parindent 0in   

\makeatletter \@addtoreset{equation}{section}


\begin{document}

\linespread{1.3}

\newcommand{\E}{\mathbb{E}}
\newcommand{\PP}{\mathbb{P}}
\newcommand{\RR}{\mathbb{R}}
\newcommand{\NN}{\mathbb{N}}

\newtheorem{theorem}{Theorem}[section]
\newtheorem{lemma}[theorem]{Lemma}
\newtheorem{coro}[theorem]{Corollary}
\newtheorem{defn}[theorem]{Definition}
\newtheorem{assp}[theorem]{Assumption}
\newtheorem{expl}[theorem]{Example}
\newtheorem{prop}[theorem]{Proposition}
\newtheorem{rmk}[theorem]{Remark}
\newtheorem{notation}[theorem]{Notation}

\newcommand\tq{{\scriptstyle{3\over 4 }\scriptstyle}}
\newcommand\qua{{\scriptstyle{1\over 4 }\scriptstyle}}
\newcommand\hf{{\textstyle{1\over 2 }\displaystyle}}
\newcommand\hhf{{\scriptstyle{1\over 2 }\scriptstyle}}

\newcommand{\eproof}{\indent\vrule height6pt width4pt depth1pt\hfil\par\medbreak}

\def\a{\alpha} \def\g{\gamma}
\def\e{\varepsilon} \def\z{\zeta} \def\y{\eta} \def\o{\theta}
\def\vo{\vartheta} \def\k{\kappa} \def\l{\lambda} \def\m{\mu} \def\n{\nu}
\def\x{\xi}  \def\r{\rho} \def\s{\sigma}
\def\p{\phi} \def\f{\varphi}   \def\w{\omega}
\def\q{\surd} \def\i{\bot} \def\h{\forall} \def\j{\emptyset}

\def\be{\beta} \def\de{\delta} \def\up{\upsilon} \def\eq{\equiv}
\def\ve{\vee} \def\we{\wedge}

\def\t{\tau}

\def\F{{\cal F}}
\def\T{\tau} \def\G{\Gamma}  \def\D{\Delta} \def\O{\Theta} \def\L{\Lambda}
\def\X{\Xi} \def\S{\Sigma} \def\W{\Omega}
\def\M{\partial} \def\N{\nabla} \def\Ex{\exists} \def\K{\times}
\def\V{\bigvee} \def\U{\bigwedge}

\def\1{\oslash} \def\2{\oplus} \def\3{\otimes} \def\4{\ominus}
\def\5{\circ} \def\6{\odot} \def\7{\backslash} \def\8{\infty}
\def\9{\bigcap} \def\0{\bigcup} \def\+{\pm} \def\-{\mp}
\def\<{\langle} \def\>{\rangle}

\def\lev{\left\vert} \def\rev{\right\vert}
\def\1{\mathbf{1}}

\def\tl{\tilde}
\def\trace{\hbox{\rm trace}}
\def\diag{\hbox{\rm diag}}
\def\for{\quad\hbox{for }}
\def\refer{\hangindent=0.3in\hangafter=1}

\newcommand{\Xtk}{X_{t_{k}}}
\newcommand{\Xtkk}{X_{t_{k+1}}}

\newcommand\wD{\widehat{\D}}

\title
	{ \bf Convergence, Non-negativity and Stability of a New Milstein Scheme with Applications to Finance}

	\author{   Desmond J. Higham%
		\thanks{%
			  Department of Mathematics and Statistics,
			  University of Strathclyde,
			   Glasgow, G1 1XH, Scotland, UK
			  (\texttt{d.j.higham@strath.ac.uk}).
			 }
	\and
		Xuerong Mao%
		\thanks{%
			  Department of Mathematics and Statistics,
			  University of Strathclyde,
			   Glasgow, G1 1XH, Scotland, UK
			  (\texttt{x.mao@strath.ac.uk}).
			 }
	\and
		Lukasz Szpruch%
		  \thanks{%
			  Mathematical Institute,
			  University of Oxford,
			  24-29 St Giles,
			  Oxford OX1 3LB, UK
			  (\texttt{szpruch@maths.ox.ac.uk}).
			  }
		    }
	\date{}

	\maketitle




	\begin{abstract}
	\textsf{\em We propose and analyse a new 
	Milstein type scheme for simulating stochastic differential equations (SDEs) with highly nonlinear
	coefficients. Our work is motivated by the need to justify multi-level Monte Carlo simulations for
	mean-reverting financial models with polynomial growth in the diffusion term.
	We introduce a double implicit Milstein scheme and show that it possesses desirable
	properties. It converges strongly and preserves non-negativity for a rich family of financial 
	models and can reproduce linear and nonlinear stability behaviour of the underlying
	SDE without severe restriction on the time step. Although the scheme is implicit, we point
	out examples of financial models where an explicit formula for the solution to the scheme
	can be found. 
	}

	\medskip

	\noindent \textsf{{\bf Key words: } \em Milstein scheme, implicit schemes 
	stochastic differential equation, stability, strong convergence, non-negativity}

	\medskip
	\noindent{\small\bf 2000 Mathematics Subject Classification: } 60H10,\;65J15

	\end{abstract}

	\pagenumbering{arabic}

	\section{Introduction} \label{sec:intro}

	We study numerical approximation of the scalar
	stochastic differential equation (SDE)
	\begin{equation}   \label{eq:SDE}
	dx(t)=f(x(t))dt+g(x(t))dw(t).
	\end{equation}
	Here $x(t)\in \RR$ for each $t\ge 0$,
	and,  for simplicity, 
	$x(0)$ is taken to be constant.
	 We  assume
	that $f\in C^{1}(\RR,\RR)$ and $g\in C^{2}(\RR,\RR)$.
	Throughout we let
	$(\Omega, {\mathcal{F}},\{{\mathcal{F}}_t\}_{t\geq 0}, \PP)$ be a
	complete probability space with a filtration
	$\{{\mathcal{F}}_t\}_{t\geq 0}$ satisfying the usual conditions,
	that is, right continuous and increasing while
	${\mathcal{F}}_0$ contains all $\PP$-null sets, and we let $w(t)$
	be a  Brownian motion defined on the probability space.

	Numerical approximations for equation \eqref{eq:SDE} are well
	studied in the case of globally Lipschitz continuous coefficients
	\cite{kloeden1992numerical}. Super-linearly
	growing coefficients, however, raise many new questions. 
	An important example is the Heston stochastic
	volatility 3/2-model \cite{heston1997simple,lewis2000option}:
	\begin{equation} \label{eq:heston}
	dx(t)=x(t)(\mu -\a x(t))dt+\be x(t)^{3/2}dw(t), \quad \mu,\a,\be>0.
	\end{equation}
	This equation is also known as an inverse square-root process 
	and was shown to be 
	useful for modelling term structure dynamics 
	\cite{ahn1999parametric}. We may also view  
	 \eqref{eq:heston} as a stochastic extension to the logistic equation \cite{gard1988introduction}.
	Recently \cite{hutzenthaler2011strong} demonstrated 
	that the standard 
	Euler-Maruyama (EM) discretization scheme can diverge in strong and weak senses
	for SDEs with super-linear coefficients.  
	However, in 
      \cite{szpruch-diss,Szpruch2010monotone} 
       it is shown that 
	an implicit Euler-type method   
	strongly converges for nonlinearities similar to \eqref{eq:heston}.
	These positive results rely heavily on a one-sided 
	Lipschitz condition satisfied by  
	the drift coefficients of the SDE \eqref{eq:SDE}, that is,
	for some constant $K$, 
	\begin{equation} \label{con:one-sided}
	 (x - y)(f(x) - f(y) ) \le K \lev x-y \rev^{2}, \quad \hbox{for } x,y \in\RR.
	\end{equation}
	The solution of \eqref{eq:heston} is non-negative and 
	condition \eqref{con:one-sided}
	holds in the relevant region $x,y\ge0$. 
	However, in general, numerical approximations do not preserve non-negativity 
	and hence convergence theorems developed in 
	\cite{szpruch-diss,Szpruch2010monotone} cannot 
	be applied in this situation. 

	Preservation of positivity 
     is a 
	desirable modeling property, and, in many cases,
        non-negativity 
	of the numerical approximation is needed in order for the scheme to be
	well defined. For example, evaluating the diffusion coefficient in \eqref{eq:heston}
	for a negative argument does not make sense. Many fixes have been proposed in
	the literature, but these can lead to substantial bias \cite{lord-comparison}.
For more information about positivity preservation we refer the reader to
\cite{Appleby2010,kahl2008structure,schurz2005convergence,szpruch-strongly}.
It was shown in \cite{kahl2006balanced} that a class of balanced methods can
preserve positivity under an appropriate choice of the weight functions, but
strong convergence was proved only under a 
global Lipschitz condition \cite{milstein1998balanced}.
Kahl et al.\ \cite{kahl2008structure} proved
that the classical EM scheme does not preserve positivity for
any scalar SDE. In the case of the drift implicit EM scheme positivity
can be preserved, \cite{szpruch-strongly}, if the drift coefficient has a very
special form, for example as in the Ait-Sahalia interest rate model \cite{ait1996testing}.
It was also shown in \cite{kahl2008structure} that the drift implicit
Milstein scheme applied to
\begin{equation} \label{eq:ex1}
dx(t)=\a(\mu-x(t))dt+\be x(t)^{p}dw(t) \quad  \a,\mu,\be>0, \mathrm{and }\, \, \, p\in[0.5],
\end{equation}
preserves positivity with no restriction on the time step if $p=0.5$. For $p\in(0.5,1]$ we need
to restrict the step size to lie below $\be^{-2}$.

Higher order approximation carries some pitfalls. 
It was demonstrated in \cite{higham2000stability} that
Milstein applied to a linear scalar SDE has worse stability properties
than EM, even once we allow for implicitness in the drift coefficient.
This is undesirable, particularly in the 
multi-level Monte Carlo 
(MLMC) setting 
\cite{giles2006improved,giles2008multilevel}, 
where we are required 
to use many simulations with large discretization time step.  
 It is therefore natural to look for more advanced numerical
techniques that automatically capture such a property. 

In order to address the issues mentioned above, we introduce a new
 $(\o,\s)$-Milstein scheme for a general scalar SDE.
Given any step size $\Delta
t$,  we define the partition $\mathcal{P}_{\Delta
t}:=\{t_{k}=k\Delta t :k=0,1,2,...\}$ of the half line
$[0,\infty)$. Letting $X_{t_{k}}$ denote the approximation to $x(t_{k})$,
with $X_{t_{0}}=x(0)$ and $\D w_{t_{k}}=w(t_{k+1})-w(t_{k})$,
the $(\o,\s)$-Milstein-scheme then has the following form
\begin{align} \label{eq:os-Milstein}
X_{t_{k+1}}& = X_{t_{k}}+\o f(X_{t_{k+1}})\D t
                 + (1-\o) f(X_{t_{k}})\D t +
                 g(X_{t_{k}})\D w_{t_{k}}
                  + \frac{1}{2}L^{1}g(X_{t_{k}})\D w_{t_{k}}^{2} \nonumber\\
            & - \frac{(1-\s)}{2}L^{1}g(X_{t_{k}}) \D t
                - \frac{\s}{2}L^{1}g(X_{t_{k+1}}) \D t,
 \end{align}
where $0 \le \o,\s \le 1$ are free parameters and $L^{1} = g\frac{\partial}{\partial x}$.
We note that the $(0,0)$-Milstein scheme reduces to classical
 Milstein \cite{milstein2004stochastic}. We will sometimes refer to $(1,1)$-Milstein
 as the double implicit scheme.
The advantage of the extra degree of implicitness offered by $\sigma$ will become clear as we
analyse the method.  
We note that the 
$(\o,\s)$-Milstein scheme can be naturally derived from 
the It\^{o}-Stratonovich expansion. Indeed, we can rewrite SDE \eqref{eq:SDE} 
into its Stratonovich form 
\begin{equation*}  
dx(t)=\underline{f}(x(t))dt+g(x(t))\circ dw(t),
\end{equation*}
where $\underline{f}(x)=f(x) - L^{1}g(x)$.
In the scalar case the drift-implicit  Milstein
scheme for the Stratonovich SDE is given by (see \cite{kloeden1992numerical} p. 345) 
\begin{align*}
\bar{X}_{t_{k+1}}& = \bar{X}_{t_{k}} + \underline{f}(\bar{X}_{t_{k+1}})\D t 
            + g(\bar{X}_{t_{k}}) \D w_{t_{k}}
             + \frac{1}{2}L^{1}g(\bar{X}_{t_{k}})\D w_{t_{k}}^{2}.
\end{align*}
Hence, we note that $(\o,\s)$-Milstein may be obtained from  the implicit
Milstein scheme for a Stratonovich SDE.   

In this work, we allow for a nonlinear drift coefficient
 and show that once $p>0.5$ in \eqref{eq:ex1} the step size restriction for non-negativity 
can be eliminated by the $(\o, \s)$-Milstein method. 
We also present fairly
general conditions for a family of Milstein schemes
to preserve positivity. Due to that property the one-sided Lipschitz 
structure 
(\ref{con:one-sided})
will not be lost. Hence, the new scheme can be shown to 
converge strongly to the solution of the SDE \eqref{eq:heston}.
Numerically we observe that the rate of strong convergence is $1$,
which 
Giles 
\cite{giles2006improved,giles2008multilevel}
has shown to be the optimal rate 
from the MLMC perspective. 

The material is structured as follows.
Section 2 contains proofs of the existence of positive solutions
to \eqref{eq:os-Milstein}.  In Section 3 we consider stability properties of
the double implicit scheme. As motivation we demonstrate that for linear test SDEs
we can significantly improve stability properties of the Milstein scheme. We then
extend this result to a more general nonlinear setting. In Section 4 
we develop the
convergence results. We give conclusions in Section 5. 
%
%

\section{Existence of a Solution for the Implicit Schemes} \label{sec:solution}
We begin with conditions that guarantee the existence of 
a unique  
solution to \eqref{eq:os-Milstein}.
These will motivate 
the assumptions that we use to force positivity.
\begin{lemma} \label{Zeidler}
Let $F$ be a function defined on $\RR$ and consider the equation
\begin{equation} \label{eq:F}
F(x)=b,
\end{equation}
for a given $b \in \RR$. If $F$ is strictly monotone, i.e.,
\begin{equation} \label{eq:strict mono}
(x-y)(F(x)-F(y))>0,
\end{equation}
for all $x,y\in\RR $, $x\neq y$, then equation (\ref{eq:F}) has
at most one solution. If $F$ is continuous and coercive, i.e.,
\begin{equation}
\lim_{\lev x \rev\rightarrow \8}\frac{xF(x)}{\lev x \rev}=\8,
\end{equation}
then for every $b \in\RR$, equation (\ref{eq:F}) has a solution $x
\in \RR$. Moreover, the inverse operator $F^{-1}$ exists.
\end{lemma}
A proof follows directly from Theorem 26.A in \cite{Zeidler1985}.
In order to prove that the $(\o,\s)$-Milstein (\ref{eq:os-Milstein}) scheme
is well defined we impose two conditions.
\begin{assp} \label{as:one}
Coefficients $f$ and $g$ in \eqref{eq:SDE}  are locally Lipschitz continuous and satisfy the
following two conditions: \newline
\textit{\underline{One-sided Lipschitz condition.}} There exists a constant $K>0$ such that
\begin{equation} \label{as:mono1}
(x-y)(f(x)-f(y))\leq K\lev x-y \rev^{2}  \quad \hbox{for all} \quad x,y\in \RR.
\end{equation}
\textit{\underline{Monotone condition.}} Operator $L^{1}$ acting on $g$ satisfies 
\begin{equation} \label{as:diff1}
(x-y)(L^{1}g(x)-L^{1}g(y) )\ge 0 \quad \hbox{for all} \quad  x,y\in\RR.
\end{equation}
\end{assp}
\begin{rmk} \label{rmk}
From Assumption \ref{as:one} and the Young inequality we may 
show that
the drift coefficient $f$ satisfies a one-sided Lipschitz-type condition
\[
 x f(x) \le K \lev x^{2}\rev + xf(0 ) 
\le a + b\lev x\rev^{2} \quad \hbox{for all} \quad  x\in\RR,
\]
where $a=0.5 \lev f(0)\rev^{2}$ and $b=(2K+1)/2$.
Also from Assumption \ref{as:one} we can show that $x  L^{1}g(x)$ 
is bounded   
below by a linear function
\begin{equation} \label{eq:lowerbound}
 x  L^{1}g(x) \ge  x L^{1}g(0) \quad \hbox{for all} \quad  x,y\in\RR.
\end{equation}
\end{rmk}
\begin{lemma} \label{defbem}
Define, for any given $\D t < ( \o (K+1))^{-1}$,
\begin{equation}
F(x)=x-\o f(x) \D t+\frac{\s}{2}L^{1}g(x), \quad x\in \RR.
\label{eq:Fdef}
\end{equation}
Then under Assumption \ref{as:one}, for any $b \in \RR$ 
there exists a unique $x \in \RR$ such that 
$F(x)=b$ and hence the method
\eqref{eq:os-Milstein} is well defined.
\end{lemma}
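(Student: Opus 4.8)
The plan is to verify the two hypotheses of Lemma \ref{Zeidler} for the function $F$ defined in \eqref{eq:Fdef}, namely strict monotonicity and continuity together with coercivity. Since Assumption \ref{as:one} already gives us local Lipschitz continuity of $f$ and $g$ (hence of $L^1 g = g\, g'$, which involves $g \in C^2$), continuity of $F$ is immediate and requires no work. So the substance is to establish the strict monotonicity inequality \eqref{eq:strict mono} and the coercivity limit, after which Lemma \ref{Zeidler} delivers existence, uniqueness, and invertibility of $F$ in one stroke; unique solvability of $F(x)=b$ for every $b$ is exactly what is needed to make the implicit step \eqref{eq:os-Milstein} well defined, since one collects all the $X_{t_{k+1}}$ terms on the left and all known $X_{t_k}$, $\D w_{t_k}$ terms on the right.

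For strict monotonicity I would compute, for $x \neq y$,
\[
(x-y)\bigl(F(x)-F(y)\bigr) = \lev x-y\rev^2 - \o\,\D t\,(x-y)\bigl(f(x)-f(y)\bigr) + \frac{\s}{2}(x-y)\bigl(L^1 g(x)-L^1 g(y)\bigr).
\]
The monotone condition \eqref{as:diff1} shows the last term is nonnegative, so it can be discarded. For the middle term, the one-sided Lipschitz bound \eqref{as:mono1} gives $(x-y)(f(x)-f(y)) \le K\lev x-y\rev^2$, so
\[
(x-y)\bigl(F(x)-F(y)\bigr) \ge \lev x-y\rev^2 - \o\,\D t\,K\lev x-y\rev^2 = (1 - \o K \D t)\lev x-y\rev^2.
\]
The stated step-size restriction $\D t < (\o(K+1))^{-1}$ forces $\o K \D t < K/(K+1) < 1$, so the coefficient $1 - \o K \D t$ is strictly positive and the right-hand side is $>0$ whenever $x \neq y$, giving \eqref{eq:strict mono}.

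For coercivity I would form $xF(x)$ and bound it below using Remark \ref{rmk}. The drift term contributes $-\o\,\D t\, x f(x) \ge -\o\,\D t\,(a + b\lev x\rev^2)$ by the bound in Remark \ref{rmk}, and the $L^1 g$ term contributes $\tfrac{\s}{2} x L^1 g(x) \ge \tfrac{\s}{2} x L^1 g(0)$ by \eqref{eq:lowerbound}; thus
\[
xF(x) \ge \lev x\rev^2 - \o\,\D t\,(a + b\lev x\rev^2) + \frac{\s}{2} x L^1 g(0) = (1 - \o b\,\D t)\lev x\rev^2 - \o a\,\D t + \frac{\s}{2} x L^1 g(0).
\]
Dividing by $\lev x\rev$ and sending $\lev x\rev \to \infty$ yields $+\infty$ provided the quadratic coefficient $1 - \o b\,\D t$ is positive, i.e.\ provided $\o\,\D t < 1/b = 2/(2K+1)$. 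The main obstacle is checking that the given restriction $\D t < (\o(K+1))^{-1}$ indeed implies this coercivity condition: since $b = (2K+1)/2$, we need $\o\,\D t (2K+1)/2 < 1$, and from $\o\,\D t < 1/(K+1)$ we get $\o\,\D t(2K+1)/2 < (2K+1)/(2(K+1)) < 1$ because $2K+1 < 2K+2$. Hence the same step-size bound that secured strict monotonicity also secures coercivity, and with both hypotheses of Lemma \ref{Zeidler} verified the existence of a unique $F^{-1}$ completes the proof.
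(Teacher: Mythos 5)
Your proposal is correct and follows essentially the same route as the paper: verify continuity, strict monotonicity via the one-sided Lipschitz condition \eqref{as:mono1} and the monotone condition \eqref{as:diff1}, and coercivity via the bounds of Remark \ref{rmk} and \eqref{eq:lowerbound}, then invoke Lemma \ref{Zeidler}. The only difference is that you make explicit the check that $\D t < (\o(K+1))^{-1}$ implies $1-\o b\,\D t>0$ with $b=(2K+1)/2$, which the paper leaves implicit.
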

\begin{proof} In view of Lemma \ref{Zeidler} it is enough to
show that the function $F$ is continuous, coercive and strictly
monotone. Clearly, $F(x)$ is continuous on $\RR$. By
Assumption \ref{as:one},
\begin{equation*}
(x-y)(F(x)-F(y))
\ge
\lev x-y \rev^{2}-\o K\Delta t \lev x-y\rev^{2}
=(1-\o K\Delta t)\lev x-y\rev^{2}
>0,
\end{equation*}
for $\D t < ( \o (K+1))^{-1}$. Also by Assumption \ref{as:one}
and Remark \ref{rmk} 
\begin{align} \label{eq:coerc}
 xF(x) & =  x(x-\o f(x)\D t +\frac{\s}{2}L^{1}g(x)\D t) \\
& \ge  \lev x\rev^{2} ( 1 - \o \frac{2K+1}{2} \D t )
- \frac{\o}{2}x \lev f(0) \rev^{2} \D t +\frac{\s}{2}  x L^{1}g(0)  \D t, \nonumber
\end{align}
so $F$ is coercive.  
\end{proof}
From now on we assume that  
\begin{equation} \label{eq:time_re}
 \D t < ( \o (K+1))^{-1}
\end{equation}
\subsection{Existence of a Positive Solution for
the $(\o,\s)$-Milstein Scheme}
In this subsection we introduce assumptions on coefficients
$f$ and $g$ of equation \eqref{eq:SDE} that allow us to prove the existence
of a positive solution to 
\eqref{eq:os-Milstein}.
\begin{defn} \label{def:pos}
Given $x(0)>0$, if the solution of \eqref{eq:SDE} satisfies
$ \PP(\{x(t) > 0:t>0 \})= 1$ $( \PP(\{x(t) \ge 0:t>0 \})= 1)$,
then a stochastic one-step integration scheme computing 
approximations $X_{t_{k}} \approx x(t_{k})$
preserves positivity (non-negativity) if
\begin{equation*}
\PP(\{X_{t_{k+1}} > 0|X_{t_{k}} > 0 \})=1\quad  (\PP(\{X_{t_{k+1}} \ge 0|X_{t_{k}} \ge 0 \})=1).
\end{equation*}
\end{defn}
Let us note that to use the ideas from Lemma \ref{defbem} to prove 
the existence of a positive 
solution to the implicit scheme
we need to assume that a one-sided Lipschitz condition on $f$ and monotone condition on
$L^{1}g$ hold only on the positive domain. This significantly relaxes the conditions
required for the existence and uniqueness of a solution to the implicit scheme
\eqref{eq:os-Milstein}.
\begin{assp}\label{as:one-sided}
Coefficients $f$ and $g$ of the equation \eqref{eq:SDE} are locally Lipschitz continuous and satisfy the
following two conditions: \newline
\textit{\underline{One-sided Lipschitz condition on $\RR_{+}$.}} 
There exists a constant $K>0$ such that
\begin{equation} \label{as:mono}
(x-y)(f(x)-f(y))\leq K\lev x-y \rev^{2}  \quad \hbox{for all} \quad x,y\in \RR_{+}.
\end{equation}
\textit{\underline{Monotone condition on $\RR_{+}$.}} Operator $L^{1}$ acting on $g$ satisfies 
\begin{equation} \label{as:diff}
(x-y)(L^{1}g(x)-L^{1}g(y) )\ge 0 \quad \hbox{for all} \quad  x,y\in\RR_{+}.
\end{equation}
\end{assp}
Many mean-reverting models with super- and sub-linear
diffusion coefficients  satisfy Assumption \ref{as:one-sided};
for example, the mean-reverting SDEs
\begin{equation*}
dx(t)= (\mu-x(t)^{q})dt + x(t)^{p}dw(t) \quad \hbox{for } x\in\RR,
\end{equation*}
with $\mu,q>0$ and $ p\ge 0.5$.

In general, boundary behavior of one-dimensional SDEs can be fully characterized
by the Feller test \cite{karatzas1991brownian}. 
Let us consider the interval $(0,\8)$. We assume that  $f$ and $g$ are locally Lipschitz continuous in $(0,\8)$
and that $g^{2}(x) > 0$, for $x\in (0,\8)$.
Let us also define the scale function
\[
 p(x) = \int_{c}^{x} \exp\left[  -2\int_{c}^{s}\frac{f(z)}{g^{2}(z)}dz \right]ds,
\]
where $c\in \RR$. Since we analyse the behaviour of the above function at $0$,
we assume that $c>x$. By Proposition 5.22 in \cite{karatzas1991brownian} we have that 
if $p(0+)=-\8$ then
$
 \PP \left[ \inf_{0\le t <\8} x(t) = 0   \right] = 1 
$.
Therefore, in order to show that the 
solution to \eqref{eq:SDE} is non-negative it is enough to 
show that $p(0+)=-\8$.
 \begin{assp}  \label{as:positivity}
The coefficients $f$ and $g$ in \eqref{eq:SDE} satisfy the following conditions: 
\begin{equation*}
f(0)\ge 0, \quad g^{2}(x) > 0 \; \hbox{for}\;  x\in (0,\8), \;  \hbox{and}\;  g(0)=0 \;\hbox{for} \; x=0.
\end{equation*}
 \end{assp}
To understand Assumption \ref{as:positivity} better, we proceed with a heuristic
argument. Lets  assume that the solution 
of \eqref{eq:SDE} attained $0$ at time $t$. Since the solution 
is Markovian we can consider the solution to this SDE with initial condition $x(t)=0$ that reads
\[
 dx(t) = f(0)dt + g(0)dw(t).
\]
It is clear that we need to have $g(0)=0$ and $f(0)\ge 0$ in order for $x(t)$ 
to stay non-negative. 
%
%
\begin{theorem} \label{th:Positivity}
Let Assumptions \ref{as:one-sided} and \ref{as:positivity}
hold. In addition we require that  
\[
L^{1}g(x) > 0 \quad  \hbox{for } x>0.
\]
Then there exists 
a 
unique positive 
solution to the $(\o,\s)$-Milstein scheme
\eqref{eq:os-Milstein} if
\begin{equation} \label{eq:timestep}
x - \frac{g^{2}(x)}{2L^{1}g(x)} + (1-\o) f(x)\D t-\frac{(1-\s)}{2}L^{1}g(x) \D t >  \,-\o f(0)\D t , \qquad  x > 0.
\end{equation}
Similarly,  
a 
unique
non-negative solution exists if 
\begin{equation} \label{eq:timestep2}
 x - \frac{g^{2}(x)}{2L^{1}g(x)} + (1-\o) f(x)\D t-
\frac{(1-\s)}{2}L^{1}g(x) \D t \ge  \, -\o f(0)\D t , \qquad  x > 0.
\end{equation}
\end{theorem}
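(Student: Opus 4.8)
The plan is to recast the implicit step of \eqref{eq:os-Milstein} as the scalar root-finding problem $F(\Xtkk)=b$ and then, exactly as in Lemma \ref{defbem}, to control the invertibility of $F$ — but this time only on the half-line $(0,\8)$, which is all that Assumption \ref{as:one-sided} provides. Collecting on the left the terms carrying the unknown $\Xtkk$, the scheme reads $F(\Xtkk)=b$ with
\[
F(x)=x-\o f(x)\D t+\frac{\s}{2}L^{1}g(x)\D t,
\]
and
\[
b=\Xtk+(1-\o)f(\Xtk)\D t+g(\Xtk)\D w_{t_{k}}+\hf L^{1}g(\Xtk)\D w_{t_{k}}^{2}-\frac{(1-\s)}{2}L^{1}g(\Xtk)\D t.
\]
First I would restrict $F$ to $(0,\8)$ and repeat the monotonicity computation of Lemma \ref{defbem}: using \eqref{as:mono}, \eqref{as:diff} and the standing restriction \eqref{eq:time_re} one obtains $(x-y)(F(x)-F(y))\ge(1-\o K\D t)\lev x-y\rev^{2}>0$ for $x,y>0$, so $F$ is continuous and strictly increasing on $(0,\8)$. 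This already delivers uniqueness of any positive root.

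Next I would pin down the exact range of $F$ over $(0,\8)$, which is the decisive ingredient. Since $F$ is continuous and strictly increasing, its image is the open interval $\bigl(F(0+),F(\8-)\bigr)$. The left endpoint is read off from Assumption \ref{as:positivity}: because $g(0)=0$ we have $L^{1}g(0)=g(0)g'(0)=0$, whence
\[
F(0+)=\lim_{x\to0^{+}}\Bigl(x-\o f(x)\D t+\frac{\s}{2}L^{1}g(x)\D t\Bigr)=-\o f(0)\D t.
\]
For the right endpoint the coercivity estimate of Lemma \ref{defbem} — whose underlying bounds (Remark \ref{rmk}) invoke the one-sided and monotone conditions only for positive arguments, and thus remain valid under Assumption \ref{as:one-sided} — together with $1-\o\frac{2K+1}{2}\D t>0$ under \eqref{eq:time_re}, gives $F(x)\to+\8$. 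Hence $F$ maps $(0,\8)$ bijectively onto $\bigl(-\o f(0)\D t,\8\bigr)$, and a positive solution $\Xtkk=F^{-1}(b)$ exists precisely when $b>-\o f(0)\D t$.

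The \emph{crux} — the only genuinely new step beyond Lemma \ref{defbem} — is that $b$ is random through $\D w_{t_{k}}$, which is Gaussian and hence unbounded, so positivity with conditional probability one forces $b>-\o f(0)\D t$ to hold for \emph{every} realization $\xi\in\RR$ of the increment. Viewing $b$ as a quadratic in $\xi$, the extra hypothesis $L^{1}g(\Xtk)>0$ makes $\hf L^{1}g(\Xtk)\xi^{2}+g(\Xtk)\xi$ a strictly convex parabola, minimized at $\xi^{*}=-g(\Xtk)/L^{1}g(\Xtk)$ with value $-g^{2}(\Xtk)/(2L^{1}g(\Xtk))$. Substituting this worst case, the requirement $\inf_{\xi}b>-\o f(0)\D t$ becomes
\[
\Xtk-\frac{g^{2}(\Xtk)}{2L^{1}g(\Xtk)}+(1-\o)f(\Xtk)\D t-\frac{(1-\s)}{2}L^{1}g(\Xtk)\D t>-\o f(0)\D t,
\]
which is exactly \eqref{eq:timestep} evaluated at $x=\Xtk$. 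Thus, conditional on $\Xtk=x>0$, hypothesis \eqref{eq:timestep} guarantees $b>-\o f(0)\D t$ for all $\xi$, so $\Xtkk=F^{-1}(b)>0$ surely, giving $\PP(\{\Xtkk>0\mid\Xtk>0\})=1$. The non-negative statement follows verbatim on replacing each strict inequality by its non-strict counterpart, which relaxes the range of $F$ to the closed half-line and reproduces \eqref{eq:timestep2}.
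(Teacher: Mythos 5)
Your proposal is correct and follows essentially the same route as the paper: reduce the implicit step to $F(X_{t_{k+1}})=b$, use strict monotonicity and coercivity of $F$ on $(0,\infty)$ (with the limiting value $-\theta f(0)\Delta t$ at $0^{+}$) to identify its range, and minimize the quadratic in the Brownian increment to arrive at \eqref{eq:timestep}. The only detail the paper adds for the non-negative case, which your ``verbatim'' remark glosses over, is the trivial check that once the scheme reaches $X_{t_{k}}=0$ one still has $b(0)=(1-\theta)f(0)\Delta t\ge -\theta f(0)\Delta t$ by Assumption \ref{as:positivity}.
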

\begin{proof}
In view of Lemma \ref{Zeidler} and Definition \ref{def:pos} in order to prove the lemma we analyse the following equation
\[
F(X_{k+1}) = X_{k}, \quad \forall k. 
\]  
First we prove that  $ \PP \{X_{k+1} > 0 \mid X_{k} >0 \}  = 1.$
By Assumptions \ref{as:one-sided} 
 operator $F$ in (\ref{eq:Fdef}) 
is monotone on $(0,\8)$ and we have 
\begin{equation*}
\lim_{ x \rightarrow\8}\frac{xF(x)}{\lev x \rev} = \8.  
\end{equation*}
By Assumption \ref{as:positivity} we arrive at
\[
 \lim_{ x \rightarrow 0^{+}}\frac{xF(x)}{\lev x \rev} = -\o f(0) \D t.
\]
Hence operator $F$ is coercive on $(0, \8)$.  
Due to Lemma \ref{Zeidler}, we may complete the proof by showing 
\begin{equation*}
b(x)=x+(1-\o) f(x)\D t +g(x)\D w_{t_{k+1}}+
\frac{1}{2}L^{1}g(x)\D w_{t_{k+1}}^{2}-\frac{(1-\s)}{2}L^{1}g(x) \D t > -\o f(0) \D t , \quad \hbox{for} \quad x>0,
\end{equation*}
from which it follows that there exists a positive solution to
$ F(X_{t_{k+1}})=b(X_{t_{k}}) $.
First, for any given $x>0$ we find the minimum of the function
\[
H(y)=g(x)y+\frac{1}{2}L^{1}g(x)y^{2}.
\]
Under the assumption $L^{1}g(x)>0$, for $x>0$, this function possesses a global minimum 
\[
 \min_{y}H(y) = - \frac{g^{2}(x)}{2L^{1}g(x)}.
\]
Hence
\[
b(x)\ge x+(1-\o) f(x)\D t-\frac{(1-\s)}{2}L^{1}g(x) \D t  - \frac{g^{2}(x)}{2L^{1}g(x)} > -\o f(0) \D t, \quad x>0,
\]
as required. 
For the non-negative case we have $b(x)\ge -\o f(0) \D t$, $x>0$. In that case we also need to check what happens if for some $k$ we have the following event $ \{X_{k+1}=0 \mid X_{k} >0 \}$
(that corresponds to the case where $b(x) = -\o f(0) \D t$  ). Then by Assumption \ref{as:positivity}, 
$ b(0) = (1-\o)f(0)\D t$ and we require that $b(0)\ge -\o f(0) \D t $. That holds due to Assumption \ref{as:positivity}.
\end{proof}
For the 
fully implicit $(1,1)$-Milstein scheme
we see from \eqref{eq:timestep}
 that a  condition 
guaranteeing  
non-negativity independently of $\D t$ is 
\[
 x - \frac{g^{2}(x)}{2L^{1}g(x)}  \ge 0, \qquad  x > 0.
\]
%
\subsection{Example: Heston Volatility Model}
Now we demonstrate that approximation of the
$3/2$-Heston volatility model
(\ref{eq:heston})
 with the double implicit
Milstein scheme preserves non-negativity. We point out
that implicitness in the numerical approximation does not increase
computational cost in this case, since we are able to find an explicit
solution. This often will be the case in mathematical finance, where typical models have 
drift and diffusion coefficients of a polynomial type. 

The $(1,1)$-Milstein scheme has the form
\begin{align} \label{eq:11heston}
X_{t_{k+1}}& = X_{t_{k}} + f(X_{t_{k+1}})\D t +
g(X_{t_{k}})\D w_{t_{k}} + \frac{1}{2}L^{1}g(X_{t_{k}})\D w_{t_{k}}^{2} - \frac{1}{2}L^{1}g(X_{t_{k+1}}) \D t,
\end{align}
where now 
$f(x)=\mu x-\a x^{2}$, 
$g(x)=\be x^{3/2}$ and 
$L^{1}g(x) = \frac{3}{2} \be^{2} x^{2}$.
Clearly, the coefficients of equation \eqref{eq:heston} satisfy Assumptions \ref{as:one-sided} and \ref{as:positivity}. Hence,
we may show 
that 
\eqref{eq:11heston} 
has 
a unique 
non-negative 
solution 
by 
verifying condition (\ref{eq:timestep2}) in 
Theorem \ref{th:Positivity}. This reduces to 
$ x - x/3  \ge 0$ for $ x \ge 0$ and the result follows.

An explicit formula for $X_{t_{k+1}}$ can be found by
solving the relevant quadratic equation and choosing the positive solution, to give  
\[
X_{t_{k+1}}=(2(\a+\frac{3}{4}\be^{2}\D t ))^{-1}\biggl(\sqrt{(1-\mu \D t)^2+4(\a+\frac{3}{4}\be^{2}) \D t
(X_{t_{k}} + \be X_{t_{k}}^{3/2}\D w_{t_{k}}+3/4\be^{2} X_{t_{k}}^{2}w^{2}_{t_{k}})   }-(1-\mu \D t) \biggr).
\]
\section{Stability Analysis} \label{sec:Milststability}
In this section we examine the global stability of the $(\s,\o)$-Milstein scheme
(\ref{eq:os-Milstein}).
The stability conditions we derive are related to mean-square 
stability, and we are interested
in results that do not put severe restrictions on the time step.
We begin with linear test equations where we can 
derive sharp results and represent stability
regions graphically. 
\subsection{Linear Mean-Square Stability}
For the linear test SDE
\begin{equation} \label{eq:testSDE}
dx(t)=\a x(t)dt+\mu x(t)dw(t),
\end{equation}
the property of 
mean-square stability,
\[
\lim_{t\rightarrow \8}\E\lev x(t)\rev^2=0,
\]
is characterized by 
\begin{equation}
(2\a+\mu^{2})<0.
\label{eq:mschar}
\end{equation}
For the $\o$-Milstein scheme
on 
(\ref{eq:testSDE}), 
\[
X_{t_{k+1}}=X_{t_{k}}+\o \a X_{t_{k+1}}\D t + (1-\o) \a X_{t_{k}}\D t +\mu X_{t_{k}}\D w_{t_{k+1}}
+\frac{1}{2}\mu^{2}X_{t_{k}}[\D w_{t_{k+1}}^{2}-\D t],
\]
the analogous property
\begin{equation}
\lim_{k\rightarrow \8}\E\lev X_{t_{k}}\rev^2=0,
\label{eq:milstab}
\end{equation}
was studied in \cite{higham2000stability}.
In particular,  
the linear stability region 
\begin{equation}
R_{MS}:=\{\D t \a,\D t \mu^{2}\in\RR :\hbox{method mean-square stable on (\ref{eq:testSDE})}  \}
\end{equation}
was shown to be significantly smaller than that for the 
corresponding Euler-based scheme. 
We now examine the new Milstein scheme (\ref{eq:os-Milstein})
in this setting,
which reduces to 
\begin{align} \label{osMilst}
X_{t_{k+1}}=X_{t_{k}}+\o \a X_{t_{k+1}}\D t + (1-\o) \a X_{t_{k}}\D t +\mu X_{t_{k}}\D w_{t_{k+1}}\\  \notag
+\frac{1}{2}\mu^{2}X_{t_{k}}\D w_{t_{k+1}}^{2}
-\frac{(1-\s)}{2}\mu^{2}X_{t_{k}} \D t-\frac{\s}{2}\mu^{2}X_{t_{k+1}} \D t.
\end{align}
 
\begin{theorem} The $(\o,\s)$-Milstein scheme 
\eqref{osMilst} is linearly mean-square stable, 
(\ref{eq:milstab}), 
if and only if
\begin{equation} \label{stabregion}
(2\a+\mu^{2})+\D t\a^{2}(1-2\o)+\frac{\D t \mu^{2}}{2}(2\s\a+\mu^{2})<0.
\end{equation}
\end{theorem}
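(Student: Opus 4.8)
The plan is to exploit the linear, multiplicative structure of \eqref{osMilst} to reduce the second-moment recursion to a single deterministic amplification factor. First I would collect the implicit terms on the left-hand side and solve for $X_{t_{k+1}}$, writing $X_{t_{k+1}} = R_k X_{t_k}$, where
\[
R_k = \frac{1 + (1-\o)\a\D t + \m\,\D w_{t_{k+1}} + \tfrac12\m^2\D w_{t_{k+1}}^2 - \tfrac{(1-\s)}{2}\m^2\D t}{1 - \o\a\D t + \tfrac{\s}{2}\m^2\D t}.
\]
The denominator, call it $D$, is deterministic and nonzero under the standing step-size restriction (in the regime of interest $\a<0$ one in fact has $D>1$), so the scheme is well defined. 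The crucial observation is that the numerator, call it $N$, depends only on the increment $\D w_{t_{k+1}}$, which is independent of $\mathcal{F}_{t_k}$ and hence of $X_{t_k}$.

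Next I would condition on $\mathcal{F}_{t_k}$ to obtain $\E[\,|X_{t_{k+1}}|^2 \mid \mathcal{F}_{t_k}\,] = \E[R_k^2]\,|X_{t_k}|^2$, and, taking expectations, the closed recursion $\E|X_{t_{k+1}}|^2 = \E[R_k^2]\,\E|X_{t_k}|^2$. Since the $R_k$ are identically distributed, iterating gives $\E|X_{t_k}|^2 = (\E[R^2])^k\,|X_0|^2$. Therefore the limit \eqref{eq:milstab} holds if and only if $\E[R^2]<1$, equivalently $\E[N^2]<D^2$.

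The remaining work is the explicit computation of $\E[N^2]$ using the Gaussian moments $\E[\D w]=\E[\D w^3]=0$, $\E[\D w^2]=\D t$, and, crucially, $\E[\D w^4]=3(\D t)^2$; this fourth moment is precisely what distinguishes the Milstein analysis from the Euler case. Writing $N = A + \m\,\D w + \tfrac12\m^2\D w^2$ with $A = 1 + (1-\o)\a\D t - \tfrac{(1-\s)}{2}\m^2\D t$, I expect that after collecting terms the difference factorises as
\[
\E[N^2] - D^2 = \D t\Big[(2\a+\m^2) + \D t\,\a^2(1-2\o) + \tfrac{\D t\,\m^2}{2}(2\s\a+\m^2)\Big].
\]
Since $\D t>0$ and $D^2>0$, dividing through shows $\E[R^2]-1$ has the same sign as the bracketed quantity, which is exactly the left-hand side of \eqref{stabregion}, yielding the claimed criterion.

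The main obstacle is purely the bookkeeping in this last expansion. The $O(\D t)$ part must collapse to exactly $2\a+\m^2$, recovering the continuous-time condition \eqref{eq:mschar}; and in the $O((\D t)^2)$ part the cross-contributions from $A^2-D^2$ (via $(A-D)(A+D)$, where conveniently $A-D=\D t(\a-\tfrac{\m^2}{2})$), from $A\m^2\D t$, and from the $\tfrac34\m^4(\D t)^2$ term arising from $\E[\D w^4]$ must combine so that the $\a\m^2$ coefficient reduces to $\s\a\m^2$ and the $\m^4$ coefficient to $\tfrac12\m^4$. Verifying these cancellations carefully, rather than any conceptual difficulty, is where the effort lies.
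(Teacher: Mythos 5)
Your proposal is correct and follows essentially the same route as the paper: the paper likewise writes the scheme as $X_{t_{k+1}}=X_{t_k}(p+q\xi+r\xi^2)$ with $\xi\sim\NN(0,1)$ (your $N/D$ after normalising $\D w=\sqrt{\D t}\,\xi$), derives $\E\lev X_{t_{k+1}}\rev^2=\E\lev X_{t_k}\rev^2(p^2+q^2+3r^2+2pr)$ from the Gaussian moments, and reduces the criterion to this factor being less than one, which is algebraically your $\E[N^2]<D^2$. Your explicit factorisation of $\E[N^2]-D^2$ checks out, so no gap remains.
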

\begin{proof}
We rewrite (\ref{osMilst})
as a recurrence of the form
\begin{equation*}
X_{t_{k+1}}=X_{t_{k}}\left( p+q\x_{t_{k+1}}+r\x^2_{t_{k+1}} \right),
\end{equation*}
where $\x \sim\NN(0,1)$ and 
\begin{equation*}
p=\frac{1+ (1-\o) \a \D t-\frac{(1-\s)}{2}\mu^{2}\D t}{1-\o \a \D t+\frac{\s}{2}\mu^{2}\D t},
\end{equation*}
\begin{equation*}
q=\frac{\mu\sqrt{\D t}}{1-\o \a \D t+\frac{\s}{2}\mu^{2}\D t},
\end{equation*}
\begin{equation*}
r=\frac{\frac{1}{2}\mu^{2}\D t}{1-\o \a \D t+\frac{\s}{2}\mu^{2}\D t}.
\end{equation*}
Then
\begin{equation*}
\lev X_{t_{k+1}}\rev^{2}=\lev X_{t_{k}}\rev^{2}\left( p^{2}+q^{2}\x_{t_{k+1}}^{2}+r^{2}\x^{4}_{t_{k+1}} +
2pq\x_{t_{k+1}}+2pr\x^{2}_{t_{k+1}}+2qr\x^{3}_{t_{k+1}}\right).
\end{equation*}
Taking conditional expectation of both sides lead us to
\begin{equation*}
\E[\lev X_{t_{k+1}}\rev^{2}|\F_{t_{k}}]=\lev X_{t_{k}}\rev\left( p^{2}+q^{2}+3r^{2}+2pr\right).
\end{equation*}
Taking conditional expectation of both sides again we obtain
\begin{equation} \label{eq:ms}
\E\lev X_{t_{k+1}}\rev^{2}=\E\lev X_{t_{k}}\rev^{2}\left( p^{2}+q^{2}+3r^{2}+2pr\right).
\end{equation}
Therefore 
stability 
is
characterized by 
$
(p+r)^{2}+q^2+2r^2<1
$.
This is equivalent to \eqref{stabregion}, as required.
\end{proof}
\begin{rmk} \label{rmk:milst}
Let us observe that for $\o =0.5$ and $\s=1$ we
have recovered precisely the condition
(\ref{eq:mschar})
for the underlying SDE, 
so the method perfectly reproduces stability for any step-size.
More generally, for $\o \ge 0.5$ and $\s = 1$ we have the property that 
``problem stable implies method stable for all $\D t$'', which is refered to as
A-stability in the deterministic literature. 
\end{rmk}
Motivated by \cite{higham2000stability,higham2001mean} we will draw stability regions
for \eqref{osMilst} in the $x$-$y$ plane, where $x=\a \D t$ and $y=\mu^2 \D t$.
In Figure \ref{fig:stability} the stability region of the
underyling SDE \eqref{eq:testSDE} is shaded light grey.
The upper pictures in Figure \ref{fig:stability} superimpose the
stability region of the $(\o,0)$-Milstein scheme with 
$\o=0,0.5,1$, respectively, using darker shading.
We see that even in the case of a linear scalar equation we are not able to
reproduce the stability region of the underyling test equation \eqref{eq:testSDE}.
However, by introducing additional implicitness we overcome this poor
performance. The lower pictures in Figure~\ref{fig:stability}
superimpose the stability region of the $(\o,\s)$-Milstein scheme
with $(0,1),(0.5,1),(1,1)$, respectively. As stated in Remark \ref{rmk:milst},
we recover exactly the stability region of underlying test SDE \eqref{eq:testSDE}
for $\o=0.5$ and $\s=1$. 
\begin{figure}[htb]
\begin{center}
 \includegraphics[scale=0.7]{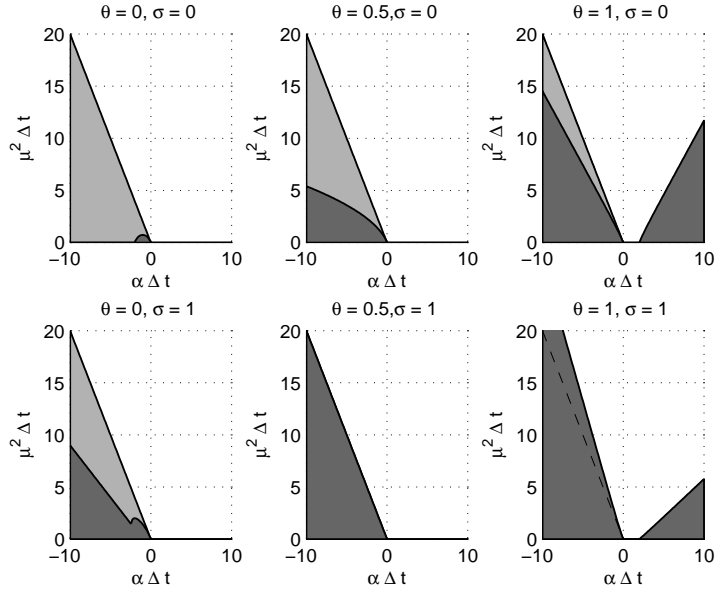}
\end{center}
        \caption{Light shading: 
linear mean-square stability of the SDE.
Darker shading: linear mean-square stability of 
Implicit Milstein 
    (upper) and  double-implicit Milstein (lower).}
\label{fig:stability}
\end{figure}
\subsection{Lyapunov Stability}
We begin this section by stating a result that 
combines
Doob's Decomposition and Martingale Convergence Theorems.
\begin{theorem}
[Lipster and Shiryaev \cite{liptser1989theory}]
 \label{L2}
Let $Z=\{Z_{n}\}_{n\in\NN}$ be a nonnegative decomposable stochastic
process with Doob-Meyer decomposition
${Z_{n}}=Z_{0}+A_{n}^{1}-A_{n}^{2}+M_{n}$, where
$A^{1}=\{A_{n}^{1}\}_{n\in\NN}$ and $A^{1}=\{A_{n}^{1}\}_{n\in\NN}$
are a.s. nondecreasing, predictable processes with
$A_{0}^{1}=A_{0}^{2}=0$, and $M=\{M_{n}\}_{n\in\NN}$ is local
$\{ \F_{n} \}_{n\in\NN}$-martingale with $M_{0}=0$.
Then
\begin{equation*}
\left\{ \omega :\lim_{n \rightarrow  \8}A^{1}(n)<\infty \right\} \subseteq \left\{
\omega :\lim_{n \rightarrow  \8}A^{2}(n)<\infty \right\} \cap \left\{
\lim_{n \rightarrow  \8}Z_{n}<\8  \right\} \quad \hbox{a.s.}
\end{equation*}
\end{theorem}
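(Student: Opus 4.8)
The plan is to reduce the statement to the almost-sure convergence of a nonnegative supermartingale, using a localization built from the predictable process $A^1$. For each level $a\in\NN$ I would define the random time $\tau_a=\inf\{n\ge 0 : A^1_{n+1}>a\}$. Because $A^1$ is predictable, $A^1_{n+1}$ is $\F_n$-measurable, so each $\tau_a$ is an $\{\F_n\}$-stopping time; and because $A^1$ is nondecreasing we have $A^1_{n\we\tau_a}\le a$ for every $n$. The key bookkeeping observation is that the event on which $A^1$ has a finite limit can be written as $\{\lim_{n\to\8}A^1_n<\8\}=\0_{a\in\NN}\{\tau_a=\8\}$, since $\lim_n A^1_n=\sup_n A^1_n$. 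It therefore suffices to establish the claimed inclusion on each $\{\tau_a=\8\}$ separately, i.e.\ to work with the process stopped at $\tau_a$ and then take a countable union, discarding the countably many exceptional null sets.

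Next I would stop everything at $\tau_a$ and isolate a nonnegative local martingale. Set $U_n:=Z_{n\we\tau_a}+A^2_{n\we\tau_a}$; from the decomposition this equals $Z_0+A^1_{n\we\tau_a}+M_{n\we\tau_a}$, and it is nonnegative because $Z\ge 0$ and $A^2\ge 0$. Consequently
\[
Z_0+a+M_{n\we\tau_a}=U_n+(a-A^1_{n\we\tau_a})\ge 0,
\]
so $\{Z_0+a+M_{n\we\tau_a}\}_n$ is a nonnegative local martingale. A nonnegative local martingale is a supermartingale (apply conditional Fatou along the localizing sequence, using $\E Z_0<\8$ so that integrability propagates forward), and a nonnegative supermartingale converges almost surely to a finite limit by the martingale convergence theorem. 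Hence $M_{n\we\tau_a}$ converges a.s.\ as $n\to\8$.

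Finally I would transfer this back to the unstopped objects. On $\{\tau_a=\8\}$ we have $M_{n\we\tau_a}=M_n$, so $M_n$ converges to a finite limit a.s.\ on that event; combined with the assumed convergence of $A^1_n$, the nonnegativity $0\le Z_n=Z_0+A^1_n-A^2_n+M_n$ yields $A^2_n\le Z_0+A^1_n+M_n$, whose right-hand side has a finite limit. Since $A^2$ is nondecreasing, $\lim_n A^2_n<\8$, and then $Z_n=Z_0+A^1_n-A^2_n+M_n$ is a sum of a.s.\ convergent sequences, so $\lim_n Z_n<\8$. Taking the union over $a\in\NN$ gives the inclusion on all of $\{\lim_n A^1_n<\8\}$.

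I expect the main obstacle to be dealing with the fact that $M$ is only a \emph{local} martingale, which blocks a direct appeal to $L^1$- or $L^2$-martingale convergence. The localization at $\tau_a$ is precisely the device that repairs this: it uses the predictability of $A^1$ (so that $\tau_a$ is genuinely a stopping time) and the boundedness $A^1_{n\we\tau_a}\le a$ to convert the problem into one about a \emph{nonnegative} local martingale, for which the supermartingale property and its a.s.\ convergence are automatic. The remaining points — verifying integrability of $Z_0$ so that the supermartingale property holds, and controlling the accumulation of null sets over the countably many levels $a$ — are routine.
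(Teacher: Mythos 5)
The paper does not prove this theorem at all: it is imported verbatim (as Theorem~\ref{L2}) from Liptser and Shiryaev, so there is no in-paper argument to compare against. Your proof is correct and is in fact the standard proof of this result. The two load-bearing ideas are both present and used properly: predictability of $A^{1}$ makes $\tau_{a}=\inf\{n: A^{1}_{n+1}>a\}$ a stopping time with $A^{1}_{n\wedge\tau_{a}}\le a$, and $\{\lim_{n}A^{1}_{n}<\infty\}=\bigcup_{a}\{\tau_{a}=\infty\}$ reduces the claim to the stopped process; then $Z_{0}+a+M_{n\wedge\tau_{a}}=Z_{n\wedge\tau_{a}}+A^{2}_{n\wedge\tau_{a}}+(a-A^{1}_{n\wedge\tau_{a}})\ge 0$ exhibits a nonnegative local martingale, whose a.s.\ convergence (via the supermartingale property obtained from conditional Fatou along a localizing sequence) gives convergence of $M_{n}$ on $\{\tau_{a}=\infty\}$, after which monotonicity of $A^{2}$ and the identity $A^{2}_{n}\le Z_{0}+A^{1}_{n}+M_{n}$ finish the argument. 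The one point you flag but do not fully resolve is the integrability of $Z_{0}$, which the theorem does not assume: this is handled by one further (countable) localization on the $\F_{0}$-events $\{Z_{0}\le b\}$, $b\in\NN$, exactly parallel to the localization in $a$; in the paper's application $Z_{0}=\lev F(X_{t_{0}})\rev^{2}$ is deterministic, so the issue does not arise there. With that remark added, the proof is complete.
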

In \cite{shen2006improved}, the authors proved a very general Stochastic LaSalle Theorem.
Here we present a simplified version of their theorem,
with fixed Lyapunov function $V(x)=\lev x\rev^{2}$.
\begin{theorem}[Shen et al. \cite{shen2006improved}] \label{T2}
Let local Lipschitz conditions hold for $f$ and $g$.
Assume further that there exists a function $z \in C(\RR;\RR_{+})$ such that
\begin{equation}
x f(x) +\frac{1}{2}\lev g(x) \rev^{2}\le -z(x),
 \label{eq:T2cond}
\end{equation}
for all $x\in \RR$.
  For any $x_{0}\in \RR$, the solution $(x(t))_{t\ge0}$ of \eqref{eq:SDE} then has the properties that
\[
\limsup_{t \rightarrow \8}\lev x(t)\rev^{2}<\8
 \qquad \mathrm{a.s.} 
\qquad \mathrm{and}
\qquad
\lim_{t \rightarrow \*} z(x(t)) = 0 \qquad \mathrm{a.s.}
\]
Further if $z(x)=0$ if and only if $x=0$, then
\[
\lim_{t \rightarrow \8}x(t)=0 \qquad \mathrm{a.s.} 
\quad \hbox{$\forall$ $x\in\RR$}.
\]
\end{theorem}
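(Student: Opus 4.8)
The plan is to study the Lyapunov functional $V(x)=|x|^2$ along the solution and to read off the claimed asymptotics from a nonnegative semimartingale convergence argument of exactly the type encoded in Theorem \ref{L2}. First I would fix a localizing sequence of exit times from balls $\{|x|\le n\}$ and apply It\^o's formula to $|x(t)|^2$, which gives (before letting $n\to\infty$)
\[
|x(t)|^2=|x_0|^2+\int_0^t\big(2x(s)f(x(s))+|g(x(s))|^2\big)\,ds+\int_0^t 2x(s)g(x(s))\,dw(s).
\]
Hypothesis \eqref{eq:T2cond} gives $2xf(x)+|g(x)|^2\le-2z(x)\le 0$, so the drift integrand is pointwise nonpositive; in particular the same bound is a Khasminskii-type non-explosion condition, so the solution is global and $x(t)$ is defined for all $t\ge 0$. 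I would then present $Z_t:=|x(t)|^2\ge 0$ in the Doob--Meyer form of Theorem \ref{L2} with vanishing increasing part $A^1\equiv 0$, decreasing part $A^2_t:=-\int_0^t(2x(s)f(x(s))+|g(x(s))|^2)\,ds$ (nondecreasing, since its integrand is $\ge 2z\ge 0$), and local martingale $M_t:=\int_0^t 2x(s)g(x(s))\,dw(s)$.

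Since $\lim_{t\to\infty}A^1_t=0<\infty$ trivially, Theorem \ref{L2} (in its continuous-time form; the stated discrete version applies after sampling the decomposition) yields, almost surely, that $\lim_{t\to\infty}|x(t)|^2$ exists and is finite, so in particular $\limsup_{t\to\infty}|x(t)|^2<\infty$, and that $\lim_{t\to\infty}A^2_t<\infty$. Because the integrand of $A^2$ dominates $2z\ge 0$, the latter delivers the crucial integrability
\[
\int_0^\infty z(x(s))\,ds<\infty\qquad\mathrm{a.s.}
\]

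From here the easy half is $\liminf_{t\to\infty}z(x(t))=0$: were it positive on a set of positive probability, $z(x(s))$ would stay above a constant for all large $s$ and force the integral to diverge. Promoting this to the full limit $\lim_{t\to\infty}z(x(t))=0$ is the main obstacle. I would argue by contradiction: on a positive-probability event suppose $\limsup z(x(t))>2\varepsilon$ while $\liminf=0$, and introduce alternating stopping times at which $z(x(\cdot))$ crosses the levels $\varepsilon$ and $2\varepsilon$. On each up-band the integral contributes at least $\varepsilon$ times the band length, so summability of $\int z\,ds$ forces the band lengths to shrink to zero; yet, by boundedness of the path together with uniform continuity of $z$ on the compact region visited, $x$ must move a fixed distance $\delta>0$ across each band. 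Controlling this tension is the delicate step and requires an oscillation estimate for the continuous semimartingale $x$, obtained from Doob's maximal inequality and the It\^o isometry, using that local Lipschitz continuity bounds $f$ and $g$ on the (random but finite) compact region; this shows that moving distance $\delta$ in vanishing time has vanishing probability, which contradicts the occurrence of infinitely many such bands.

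Finally, the strengthened conclusion follows cheaply. If $z(x)=0$ if and only if $x=0$, fix a sample path along which $x$ stays in a compact $K$ and $z(x(t))\to 0$. If $x(t)$ did not tend to $0$, some subsequence $x(t_n)$ would converge to a point $y\in K$ with $y\ne 0$, whence $z(x(t_n))\to z(y)>0$ by continuity of $z$, a contradiction; hence $x(t)\to 0$ almost surely.
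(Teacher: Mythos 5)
The paper does not actually prove Theorem \ref{T2}: it is quoted from Shen et al.\ \cite{shen2006improved} without proof, so there is no in-paper argument to compare against. Judged on its own, your proposal is the standard and correct route to the stochastic LaSalle theorem, and it deliberately mirrors the strategy the paper uses for the discrete counterpart (Theorem \ref{th:Stabilty}): It\^{o}'s formula on $V(x)=\lev x\rev^{2}$, the pointwise bound $2xf(x)+\lev g(x)\rev^{2}\le -2z(x)\le 0$ giving both non-explosion and a Doob--Meyer decomposition with $A^{1}\equiv 0$, the continuous-time nonnegative semimartingale convergence theorem yielding $\lim_{t\to\8}\lev x(t)\rev^{2}$ finite and $\int_{0}^{\8}z(x(s))\,ds<\8$ a.s., and then the upcrossing/oscillation argument to upgrade integrability of $z(x(\cdot))$ to $z(x(t))\to 0$. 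The final step (from $z(x(t))\to 0$ to $x(t)\to 0$ when $z^{-1}(0)=\{0\}$) is handled correctly via compactness of the path's closure.

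The only place that needs more care than your sketch provides is the Borel--Cantelli step in the crossing argument. Because the compact region visited by the path is random, the maximal-inequality bound for $\PP\bigl(\sup_{\s_k\le t\le \s_k+h}\lev x(t)-x(\s_k)\rev\ge \de\bigr)$ is not directly uniform in $k$. The standard fix is to work on the events $\Omega_n=\{\sup_{t\ge 0}\lev x(t)\rev\le n\}$ for each fixed $n$ (these exhaust a full-probability set by the first conclusion), so that $f$ and $g$ are bounded by deterministic constants on the relevant region and the oscillation estimate becomes uniform before summing over $k$. With that localization made explicit, your argument is complete and is essentially the proof given in the cited reference.
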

Now we present a counterpart of this Stochastic LaSalle Theorem for
the new Milstein scheme. 
\begin{theorem} \label{th:Stabilty}
Let Assumption \ref{as:one} hold.
Assume that for the
$(\o,\s)$-Milstein Scheme \eqref{eq:os-Milstein} there exists a function  $z \in C(\RR;\RR_{+})$ such that
\begin{align} \label{eq:stab}
&2 x f(x) + \lev g(x)\rev^{2}+(1-2\o)\lev f(x)\rev^{2} \D t  \nonumber  \\
&+\frac{\D t}{2} L^{1}g(x) (2 \s f(x)+ L^{1}g(x) )
\leq -z(x) \quad \quad \hbox{for all } x \in\RR. 
\end{align}
Then
\[
\limsup_{k\rightarrow \infty }\left\vert X_{t_{k}}\right\vert ^{2}<\8 
\]
and
\[
\lim_{k\rightarrow \infty } z(X_{t_{k}}) =0 \qquad \mathrm{a.s.}  
\]
Further if $z(x)=0$ if and only if $x=0$ 
then
\begin{equation*}
\lim_{k\rightarrow \infty }  X_{t_{k}} =0 \qquad \mathrm{a.s.}
\end{equation*}
\end{theorem}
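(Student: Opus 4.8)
The plan is to mirror the structure of the continuous-time LaSalle theorem (Theorem~\ref{T2}), but to work at the level of the discrete recurrence and apply the discrete martingale tool provided by Theorem~\ref{L2}. The object to track is $Z_k = |X_{t_k}|^2$, which is nonnegative, and the strategy is to find its Doob--Meyer decomposition by computing the one-step conditional increment $\E[|X_{t_{k+1}}|^2 \mid \F_{t_k}] - |X_{t_k}|^2$ and showing that, up to a martingale difference, this increment is bounded above by $-z(X_{t_k})\D t$ plus a nonnegative predictable correction. The left-hand side of the hypothesis \eqref{eq:stab} is essentially this conditional increment (divided by $\D t$), so the main computation is to verify that identity.

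First I would take the defining relation $F(X_{t_{k+1}}) = b(X_{t_k})$ from \eqref{eq:Fdef}, i.e.\ $X_{t_{k+1}} - \o f(X_{t_{k+1}})\D t + \frac{\s}{2}L^1 g(X_{t_{k+1}})\D t$ equals the explicit right-hand side $b(X_{t_k})$. The cleanest route is to square the implicit form: write the scheme as $X_{t_{k+1}} = A_k + g(X_{t_k})\D w_{t_k}$ where $A_k$ collects all the $\D t$-order and $\D w^2$-order terms (both the explicit ones at $t_k$ and the implicit ones at $t_{k+1}$). Squaring and taking $\E[\cdot\mid\F_{t_k}]$ uses $\E[\D w_{t_k}]=0$, $\E[\D w_{t_k}^2]=\D t$, and $\E[\D w_{t_k}^4]=3\D t^2$. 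A slicker alternative that avoids carrying the implicit term through the square is to use the monotonicity trick from Lemma~\ref{defbem}: from $F(X_{t_{k+1}})=b(X_{t_k})$ and the strict monotonicity estimate $(x-y)(F(x)-F(y))\ge(1-\o K\D t)|x-y|^2$, one obtains control of $|X_{t_{k+1}}|^2$ in terms of $X_{t_{k+1}}\cdot b(X_{t_k})$, after which the implicit terms can be moved to the left and bounded using the one-sided Lipschitz condition \eqref{as:mono1} and the monotone condition \eqref{as:diff1} in Assumption~\ref{as:one}. Either way, after taking conditional expectations the $\D t^2$ terms assemble into exactly the expression $(1-2\o)|f(x)|^2\D t + \frac{\D t}{2}L^1g(x)(2\s f(x)+L^1g(x))$ appearing in \eqref{eq:stab}, and the $\D t^1$ terms give $2xf(x)+|g(x)|^2$, so the hypothesis yields
\begin{equation*}
\E[|X_{t_{k+1}}|^2\mid\F_{t_k}] \le |X_{t_k}|^2 - z(X_{t_k})\D t + (\text{martingale increment}).
\end{equation*}

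Once this bound is in hand, I would identify the Doob--Meyer pieces: set $A^2_n = \sum_{k<n} z(X_{t_k})\D t$ (nondecreasing and predictable, since $z\ge0$ and $z(X_{t_k})$ is $\F_{t_k}$-measurable), let $M_n$ be the accumulated martingale differences, and take $A^1_n$ to absorb any leftover nonnegative predictable error from the estimate. Applying Theorem~\ref{L2} on the set where $A^1$ stays finite then forces both $\lim_n Z_n = \lim_k |X_{t_k}|^2 < \infty$ and $\lim_n A^2_n = \sum_k z(X_{t_k})\D t < \infty$ almost surely. Finiteness of the sum $\sum_k z(X_{t_k})\D t$ together with boundedness of the trajectory forces $z(X_{t_k})\to 0$ a.s. (using continuity of $z$ and the boundedness of $(X_{t_k})$ to rule out nonvanishing subsequences), and the final assertion $X_{t_k}\to 0$ follows from the nondegeneracy hypothesis that $z(x)=0$ iff $x=0$.

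The main obstacle I anticipate is the bookkeeping of the implicit terms when squaring: because $f$ and $L^1g$ are evaluated at the \emph{future} point $X_{t_{k+1}}$, one cannot directly take them outside the conditional expectation, and a naive expansion produces cross terms like $X_{t_{k+1}} f(X_{t_{k+1}})$ that are not measurable at time $t_k$. Resolving this cleanly is exactly where the one-sided Lipschitz bound $xf(x)\le a + b|x|^2$ and the lower bound $xL^1g(x)\ge xL^1g(0)$ from Remark~\ref{rmk} must be invoked to dominate the implicit contributions and push them into the nonnegative predictable term $A^1$, rather than trying to evaluate them exactly. Getting the inequalities to line up so that the coefficient structure of \eqref{eq:stab} emerges precisely — in particular showing that the implicitness parameters $\o$ and $\s$ enter with the stated signs $(1-2\o)$ and $2\s$ — is the delicate part of the argument; the martingale-convergence machinery afterward is essentially routine given the correct decomposition.
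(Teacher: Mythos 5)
There is a genuine gap at the heart of your argument: the quantity whose Doob--Meyer decomposition works out cleanly is not $|X_{t_k}|^2$ but $|F(X_{t_k})|^2$, where $F$ is the operator from \eqref{eq:Fdef}. The proof rests on the observation that the scheme can be rewritten as
\[
F(X_{t_{k+1}}) = F(X_{t_{k}}) + f(X_{t_{k}})\D t + g(X_{t_{k}})\D w_{t_{k+1}} + \tfrac{1}{2}L^{1}g(X_{t_{k}})\bigl(\D w_{t_{k+1}}^{2}-\D t\bigr),
\]
whose right-hand side is fully $\F_{t_k}$-measurable apart from the Brownian increment. Squaring this identity gives an \emph{exact} decomposition $|F(X_{t_{k+1}})|^2 = |F(X_{t_k})|^2 - A_{t_k}\D t + m_{k+1}$ with $m_{k+1}$ a local martingale difference and with predictable drift equal to $\D t$ times the left-hand side of \eqref{eq:stab}: the cross term $2F(X_{t_k})f(X_{t_k})\D t$ is precisely what produces the coefficients $(1-2\o)$ and $2\s$. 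Your central claim --- that $\E[|X_{t_{k+1}}|^2\mid\F_{t_k}] - |X_{t_k}|^2$ equals, up to a martingale increment, $\D t$ times the left-hand side of \eqref{eq:stab} --- is therefore not correct as stated; that identity is the one for $|F(\cdot)|^2$, and for $|X_{t_{k+1}}|^2$ the implicit terms $f(X_{t_{k+1}})$ and $L^1g(X_{t_{k+1}})$ cannot be reduced to the stated expression but only estimated, which is exactly the difficulty you flag without resolving.

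Your fallback of ``absorbing leftover nonnegative predictable error into $A^1_n$'' also defeats the application of Theorem \ref{L2}: its conclusion holds only on the event $\{\lim_n A^1_n<\8\}$, so unless you prove that this event has full probability (which would require the absorbed errors, typically of the form $\sum_k b|X_{t_k}|^2\D t$, to be a.s.\ summable --- essentially what you are trying to prove), you obtain nothing almost surely. In the correct decomposition $A^1\equiv 0$, so the theorem applies on the whole space and yields $\lim_k|F(X_{t_k})|^2<\8$ and $\sum_k A_{t_k}\D t<\8$ a.s.; since $A_{t_k}\ge z(X_{t_k})\ge 0$ by \eqref{eq:stab}, the second conclusion gives $z(X_{t_k})\to 0$. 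The return from $F(X_{t_k})$ to $X_{t_k}$ is then made via the coercivity-type lower bound $|F(x)|^2\ge(1-0.5\,\D t)|x|^2-0.5\,\s^2|L^1g(0)|^2\D t$, obtained from \eqref{eq:stab} and \eqref{eq:lowerbound}, which yields $\limsup_k|X_{t_k}|^2<\8$. Your closing steps (deducing $z(X_{t_k})\to 0$ from summability, and $X_{t_k}\to 0$ from the nondegeneracy of $z$) are fine, but the decomposition feeding them must be built on $|F(X_{t_k})|^2$.
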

\begin{proof}
We can rewrite $F$ in (\ref{eq:Fdef})
as
\begin{align*}
F(X_{t_{k+1}})
& = F(X_{t_{k}}) + f(X_{t_k})\D t + g(X_{t_k})\D w_{t_{k+1}}
 + \frac{1}{2}L^{1}g(X_{t_{k}}) ( \D w_{t_{k+1}}^{2} - \D t ).
\end{align*}
Squaring both sides, we arrive at
\begin{align*}
\lev F(X_{t_{k+1}}) \rev^{2}  = &
\lev F(X_{t_{k}}) \rev^{2}  + \lev f(X_{t_{k}})\D t \rev^{2} + \lev g(X_{t_{k}})\rev^{2}\D t
+ \frac{1}{2} \lev L^{1}g(X_{t_{k}}) \rev^2 \D t^2 
 + 2 F(X_{t_{k}}) f(X_{t_{k}})\D t + m_{k+1},
\end{align*}
where
\begin{equation} \label{eq:martingle}
\begin{split}
m_{k+1} =  & \lev g(X_{t_{k}}) \rev^2 ( \D w_{t_{k+1}}^{2} - \D t )
   + \frac{1}{2} \lev L^{1}g(X_{t_{k}}) \rev^2 [( \D w_{t_{k+1}}^{2} - \D t )^2 - 2\D t^2 ] \\
& + 2 F(X_{t_{k}})\left[  g(X_{t_{k}})\D w_{t_{k+1}} + \frac{1}{2}L^{1}g(X_{t_{k}}) ( \D w_{t_{k+1}}^{2} - \D t ) \right]\\
&+ 2 f(X_{t_{k}})\D t \left[ g(X_{t_{k}})\D w_{t_{k+1}}
 + \frac{1}{2}L^{1}g(X_{t_{k}}) ( \D w_{t_{k+1}}^{2} - \D t ) \right]  \\
& +  g(X_{t_{k}}) L^{1}g(X_{t_{k}}) ( \D w_{t_{k+1}}^{2} - \D t ) \D w_{t_{k+1}}
\end{split}
\end{equation}
is a local martingale difference. From the definition of $F$ 
we arrive at
\begin{align} \label{eq:F-ine}
\lev F(X_{t_{k+1}}) \rev^{2}  = &
\lev F(X_{t_{k}}) \rev^{2}  + 2 X_{k} f (X_{t_{k}})\D t  + \lev g(X_{t_{k}})\rev^{2}\D t
+ \frac{1}{2}  L^{1}g(X_{t_{k}}) \left[ L^{1}g(X_{t_{k}})  + 2 \s  f (X_{t_{k}})\right] \D t^{2} \nonumber \\
& +    ( 1- 2 \o) \lev f(X_{t_{k}}) \rev^{2} \D t^{2}  + m_{k+1}. 
\end{align}
Therefore
\[
\lev F(X_{t_{k+1}})\rev^2=\lev F(X_{t_{k}})\rev^2 - 
        A_{t_{k}}\D t + m_{k+1},
\]
where
\begin{eqnarray*}
A_{t_{k}}(x) & = & -\left(2 X_{t_{k}} f (X_{t_{k}}) + \lev g(X_{t_{k}})\rev^{2} + \frac{1}{2} 
L^{1}g(X_{t_{k}}) \left[ L^{1}g(X_{t_{k}})  + 2 \s 
 f (X_{t_{k}})\right] \D t \right.\\
&& \mbox{} + \left.  ( 1- 2 \o) \lev f(X_{t_{k}}) \rev^{2} \D t 
       \right). 
\end{eqnarray*}
Hence, we have obtained a decomposition that allows 
us to apply Theorem \ref{L2}, i.e.,
\[
\lev F(X_{t_{N+1}})\rev^2
=\lev F(X_{t_{0}})\rev^2-\sum_{k=0}^{N}A_{t_{k}}\D t
+\sum_{k=0}^{N}m_{k+1}.
\]
Theorem~\ref{L2} gives
$
\lim_{k\rightarrow \infty }\left\vert 
 F(X_{t_{k}})\right\vert ^{2}<\infty 
$.
By condition \eqref{eq:stab} and \eqref{eq:lowerbound}
\begin{align*} 
 \lev F(x) \rev^{2} = & \left(  x-\o f(x)\D t+\frac{1}{2}\s L^{1}g(x)\D t    \right)^{2}  \nonumber\\
  = & \lev x \rev^{2} - 2 \o x f(x) \D t - \o \s f(x) L^{1}g(x)\D t^{2} 
+ \o^{2}(f(x))^{2}\D t^{2} + \frac{1}{4} \s^{2}(L^{1}g(x))^{2}\D t^{2} + \s x L^{1}g(x) \D t  \nonumber \\
\ge &  \lev x \rev^{2} + \o z(x)\D t + \s x L^{1}g(x) \D t  \\
\ge & \lev x \rev^{2}  - \s \lev x \rev \lev L^{1}g(0) \rev \D t \\
\ge & (1-0.5 \D t) \lev x \rev^{2}  - 0.5 \,\s^{2}  \lev L^{1}g(0) \rev^{2} \D t.
 \end{align*} 
Hence $\limsup_{k\rightarrow \infty }\left\vert X(t_{k})\right\vert ^{2}$
exists and is finite almost surely.
Another implication of Theorem \ref{L2} is
\begin{align*}
\sum_{k=0}^{\8} z(X_{t_{k}}) \D t\le\sum_{k=0}^{\8}A_{t_{k}}\D t <\8 \quad\text{a.s},
\end{align*}
as required.
\end{proof}
In the case where  \eqref{eq:os-Milstein}  is non-negative it is enough if condition 
\eqref{eq:stab} holds on the non-negative half line. 
\begin{theorem}
Let conditions required for existence of non-negative solution in Theorem \ref{th:Positivity} hold.
Assume that for the
$(\o,\s)$-Milstein Scheme \eqref{eq:os-Milstein} there exists a function  $z \in C(\RR;\RR_{+})$ such that
\begin{align*} 
&2 x f(x) + \lev g(x)\rev^{2}+(1-2\o)\lev f(x)\rev^{2} \D t  \nonumber  \\
&+\frac{\D t}{2} L^{1}g(x) (2 \s f(x)+ L^{1}g(x) )
\leq -z(x) \quad \quad \hbox{for all } x \in\RR_{+}.
\end{align*}
Then
\[
\limsup_{k\rightarrow \infty }\left\vert X(t_{k})\right\vert ^{2}<\8 
\]
and
\[
\lim_{k\rightarrow \infty } z(X_{t_{k}}) =0 \qquad \mathrm{a.s.}  
\]
Further if $z(x)=0$ if and only if $x=0$ 
then
\begin{equation*}
\lim_{k\rightarrow \infty }  X_{t_{k}} =0 \qquad \mathrm{a.s.}
\end{equation*}
\end{theorem}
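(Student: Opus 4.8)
The plan is to recognise this theorem as the non-negative counterpart of Theorem~\ref{th:Stabilty}, and to exploit the fact that the only place the global Assumption~\ref{as:one} and the global validity of \eqref{eq:stab} entered that proof was in allowing the coefficients to be evaluated at arbitrary real arguments. Here the hypotheses are precisely those of Theorem~\ref{th:Positivity} that guarantee a unique non-negative solution, so with $X_{t_0}=x(0)>0$ we have $X_{t_k}\ge 0$ for every $k$ almost surely. Every evaluation of $f$, $g$ and $L^1g$ in the recursion \eqref{eq:os-Milstein} therefore takes a non-negative argument, and it is enough for all the estimates of the earlier proof to hold on $\RR_{+}$ only.

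First I would reproduce verbatim the algebraic manipulation from the proof of Theorem~\ref{th:Stabilty}. Writing $F$ as in \eqref{eq:Fdef}, rearranging \eqref{eq:os-Milstein} gives
\[
F(X_{t_{k+1}}) = F(X_{t_{k}}) + f(X_{t_k})\D t + g(X_{t_k})\D w_{t_{k+1}} + \tfrac{1}{2}L^{1}g(X_{t_{k}})(\D w_{t_{k+1}}^{2}-\D t).
\]
Squaring and isolating the local martingale difference $m_{k+1}$ of \eqref{eq:martingle} yields
\[
\lev F(X_{t_{k+1}})\rev^{2} = \lev F(X_{t_{k}})\rev^{2} - A_{t_{k}}\D t + m_{k+1},
\]
where $A_{t_{k}}$ is minus the left-hand side of \eqref{eq:stab} evaluated at $X_{t_{k}}$. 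Since $X_{t_{k}}\ge 0$, the present hypothesis (the restriction of \eqref{eq:stab} to $\RR_{+}$) gives $A_{t_{k}}\ge z(X_{t_{k}})\ge 0$. Summing over $k$ then produces a Doob--Meyer decomposition with $A^{1}\equiv 0$ and nondecreasing predictable $A^{2}_{n}=\sum_{k<n}A_{t_{k}}\D t$, so Theorem~\ref{L2} applies and delivers both $\lim_{k}\lev F(X_{t_{k}})\rev^{2}<\8$ and $\sum_{k}z(X_{t_{k}})\D t\le\sum_{k}A_{t_{k}}\D t<\8$ almost surely; the latter forces $z(X_{t_{k}})\to 0$.

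The one step deserving attention is the lower bound on $\lev F(x)\rev^{2}$ that converts finiteness of $\lim_{k}\lev F(X_{t_{k}})\rev^{2}$ into $\limsup_{k}\lev X_{t_{k}}\rev^{2}<\8$, since in Theorem~\ref{th:Stabilty} this invoked \eqref{eq:lowerbound}, derived from the global monotone condition. Here I would note that the non-negative existence hypotheses include Assumption~\ref{as:positivity}, whence $g(0)=0$ and hence $L^{1}g(0)=g(0)g'(0)=0$; combined with $L^{1}g(x)>0$ for $x>0$ and $X_{t_{k}}\ge 0$, the term $\s X_{t_{k}}L^{1}g(X_{t_{k}})\D t$ is non-negative, so the chain from the earlier proof collapses on $\RR_{+}$ to the clean bound $\lev F(x)\rev^{2}\ge\lev x\rev^{2}+\o z(x)\D t\ge\lev x\rev^{2}$. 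Thus $\limsup_{k}\lev X_{t_{k}}\rev^{2}<\8$ a.s., and the final assertion, that $\lim_{k}X_{t_{k}}=0$ whenever $z$ vanishes only at the origin, follows from the continuity of $z$ together with boundedness of the trajectory, exactly as before.

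The main, and essentially the only, obstacle is bookkeeping: one must verify that no step of the argument for Theorem~\ref{th:Stabilty} secretly evaluated a coefficient at a negative point, so that restricting every inequality to $\RR_{+}$ genuinely suffices. Given that non-negativity of $X_{t_{k}}$ holds pathwise and that $L^{1}g(0)=0$ actually simplifies the $\lev F\rev^{2}$ lower bound relative to the general case, this verification is routine and the theorem follows.
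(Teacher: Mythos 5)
Your proof is correct and follows exactly the route the paper intends: the paper's own proof is the single sentence ``analogous to the proof of Theorem \ref{th:Stabilty}'', and you have simply filled in that analogy, including the one point that genuinely needed checking (replacing the use of \eqref{eq:lowerbound} by the observation that $g(0)=0$ forces $L^{1}g(0)=0$, so $\s x L^{1}g(x)\ge 0$ on $\RR_{+}$ and the lower bound on $\lev F(x)\rev^{2}$ survives).
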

\begin{proof}
The proof is analogous to the proof of Theorem \ref{th:Stabilty}. 
\end{proof}

\begin{rmk} \label{rmk:milstb}
Following on from Remark~\ref{rmk:milst}, suppose
that 
(\ref{eq:T2cond}) holds, so the results of Theorem~\ref{T2}
hold for the SDE. Then, to minimize restrictions on the 
stepsize in 
(\ref{eq:stab}), the choice $\theta = 0.5$ 
is clearly best, and the extra freedom allowed by the parameter
$\sigma$ can be used to exploit dissipativity. For example,
on the SDE
\[
   dx(t) = -x(t)^3 dt + x(t)^2 dw(t),
\]
we have 
\[
L^{1}g(x) (2 \s f(x)+ L^{1}g(x) ) = 
L^{1}g(x) (- 2 \s x^3 + 2 x^3 ),
\]
so the choice $\s = 1$ makes 
(\ref{eq:stab}) 
independent of $\D t$ 
and identical to 
(\ref{eq:T2cond}).
\end{rmk}
\section{Convergence Result}
In this section we show that the numerical approximation \eqref{eq:os-Milstein} strongly converges 
to the solution of \eqref{eq:SDE} under fairly general conditions. We will not establish 
the rate of convergence, but we perform numerical experiments that 
suggest a rate of $1$.
We note that the $(1,1)$ scheme was considered in 
\cite{kloeden1992numerical} as an alternative to 
the more typical $(1,0)$ version. In particular, those authors showed that when 
the coefficients $f$, $g$ and $L^{1}g(x)$ in \eqref{eq:SDE} are globally 
Lipschitz, the $(1,1)$ case retains the usual first order of strong convergence.
This result is easily extended to the general $(\o,\s)$ case.
\begin{theorem} \label{th:strongkp}
Let $f$, $g$ and $L^{1}g(x)$  be globally 
Lipschitz. Then the $(\o,\s)$-Milstein scheme 
\eqref{eq:os-Milstein} strongly converges to the solution 
of the SDE \eqref{eq:SDE}, that is 
\[
  \E \left[ \sup_{0\le t_{k}\le T}\lev x(t_{k}) - X_{t_{k}}\rev^{p}\right] = \D t ^{p} \quad \hbox{for }  p\ge 2.
\]
\end{theorem}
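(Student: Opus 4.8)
The plan is to follow the classical It\^o--Taylor strong-convergence argument for Milstein schemes, as in \cite{kloeden1992numerical}, and to verify that the two extra degrees of implicitness contribute only higher-order perturbations. Write $L^{0}=f\partial_{x}+\hf g^{2}\partial_{xx}$, so that It\^o's formula applied to $g$ yields the one-step expansion of the exact solution
\[
x(t_{k+1})=x(t_{k})+f(x(t_{k}))\D t+g(x(t_{k}))\D w_{t_{k}}+\hf L^{1}g(x(t_{k}))\bigl[\D w_{t_{k}}^{2}-\D t\bigr]+R_{k},
\]
where the remainder $R_{k}$ gathers the iterated integrals of $L^{0}g$, $L^{1}L^{1}g$, $L^{0}f$ and $L^{1}f$ beyond Milstein order. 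Under the global Lipschitz (hence linear growth) hypotheses on $f$, $g$ and $L^{1}g$, the usual moment bounds on $x$ together with the Burkholder--Davis--Gundy (BDG) inequality give $\bigl(\E\lev R_{k}\rev^{p}\bigr)^{1/p}=O(\D t^{3/2})$, while the conditional mean satisfies $\lev\E[R_{k}\mid\mathcal F_{t_{k}}]\rev=O(\D t^{2})$.

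First I would absorb the $(\o,\s)$ splitting into the remainder. Since $f(x(t_{k}))=\o f(x(t_{k+1}))+(1-\o)f(x(t_{k}))+\o\bigl[f(x(t_{k}))-f(x(t_{k+1}))\bigr]$, and similarly for the $\hf\D t\,L^{1}g$ term with weight $\s$, the correction differences $f(x(t_{k}))-f(x(t_{k+1}))$ and $L^{1}g(x(t_{k}))-L^{1}g(x(t_{k+1}))$ are $O(\D t^{1/2})$ in $L^{p}$ by Lipschitz continuity and the exact-solution moment bound; multiplied by $\D t$ they are $O(\D t^{3/2})$ and fold into $R_{k}$. Hence the exact solution satisfies the $(\o,\s)$-Milstein recursion \eqref{eq:os-Milstein}, with $X$ replaced by $x$, up to an added defect $\widehat R_{k}$ of the same order as $R_{k}$.

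Next I would form the error recursion using the function $F$ of Lemma \ref{defbem}: \eqref{eq:os-Milstein} reads $F(X_{t_{k+1}})=\Phi(X_{t_{k}},\D w_{t_{k}})$, where $\Phi$ collects the explicit terms, and the exact solution obeys $F(x(t_{k+1}))=\Phi(x(t_{k}),\D w_{t_{k}})+\widehat R_{k}$. Writing $e_{k}=x(t_{k})-X_{t_{k}}$ and subtracting,
\[
F(x(t_{k+1}))-F(X_{t_{k+1}})=\bigl[\Phi(x(t_{k}),\D w_{t_{k}})-\Phi(X_{t_{k}},\D w_{t_{k}})\bigr]+\widehat R_{k}.
\]
Because $f$ and $L^{1}g$ are globally Lipschitz, $F$ is a bi-Lipschitz bijection for $\D t$ below the threshold \eqref{eq:time_re}, with $\lev F(a)-F(b)\rev\ge(1-\o K\D t)\lev a-b\rev$; this transfers the estimate back to $e_{k+1}$ at the cost of a factor $(1-\o K\D t)^{-1}=1+O(\D t)$, so the implicitness is harmless. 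The bracketed difference is Lipschitz in $e_{k}$ for the drift pieces, and for the $g$ and $L^{1}g$ pieces it is a martingale-type increment of Lipschitz size multiplying $\D w_{t_{k}}$ and $\D w_{t_{k}}^{2}-\D t$.

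Finally I would raise to the $p$-th power, sum, and take the maximum over grid points. Splitting $\widehat R_{k}$ into its conditional mean (which sums to $O(\D t)$ over $[0,T]$, hence contributes $O(\D t^{p})$) and its martingale part (whose accumulated $p$-th moment, by discrete BDG, is governed by the sum of the per-step quadratic variations, of size $O(\D t^{3})$ each, giving $O(\D t^{2})$ after summation and $O(\D t^{p})$ after the power $p/2$), and controlling the Lipschitz error terms by BDG and H\"older, I expect to reach
\[
\E\Bigl[\max_{0\le j\le n+1}\lev e_{j}\rev^{p}\Bigr]\le C\,\D t\sum_{k=0}^{n}\E\Bigl[\max_{0\le i\le k}\lev e_{i}\rev^{p}\Bigr]+C\,\D t^{p}.
\]
A discrete Gronwall lemma then gives $\E[\sup_{0\le t_{k}\le T}\lev e_{k}\rev^{p}]\le C\D t^{p}$. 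The main obstacle is obtaining the full order $\D t^{p}$ rather than $\D t^{p/2}$: this requires the careful separation of every remainder and error increment into a predictable part, each carrying an extra power of $\D t$ and surviving summation only as $O(\D t)$, and a martingale part that must be bounded through BDG on the accumulated sum rather than term by term, so that the zero conditional mean and independence of the Milstein iterated integrals are genuinely exploited. Keeping the implicit terms controlled at the same time, through the uniform Lipschitz bound on $F^{-1}$ from Lemma \ref{defbem}, is the only place where the $(\o,\s)$ scheme departs from explicit Milstein, and it costs merely the benign factor $1+O(\D t)$.
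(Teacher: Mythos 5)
Your proposal is correct and follows essentially the same route as the paper, whose entire proof is a one-line citation to the extension of the $(1,1)$ case in Chapter~12 of Kloeden and Platen \cite{kloeden1992numerical}; your sketch simply fills in the standard It\^o--Taylor/local-error argument behind that citation, including the key point that the implicit terms are absorbed as $O(\D t^{3/2})$ perturbations with $O(\D t^{2})$ conditional mean and that the map $F$ of Lemma \ref{defbem} is inverted at the cost of a factor $1+O(\D t)$. (The only cosmetic remark is that the displayed equality in the theorem statement should read $\le C\,\D t^{p}$, which is what your argument, correctly, delivers.)
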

\begin{proof}
 A proof follows by extending the $(1,1)$ case from Chapter~12 of Kloeden and Platen \cite{kloeden1992numerical}.
\end{proof}
Then using a localization procedure as in \cite{gyongy1998note,jentzen2009pathwise} we can 
prove pathwise convergence without global Lipschitz Assumption. From \cite{jentzen2009pathwise} 
we know that scheme \eqref{eq:os-Milstein} almost surely 
converges to the solution of \eqref{eq:SDE}, that is: 
\begin{theorem}
Assume that the solution to SDE \eqref{eq:SDE} has a strong solution. Then  
the $(\o,\s)$-Milstein scheme \eqref{eq:os-Milstein} converges to the solution 
of the SDE \eqref{eq:SDE} in the pathwise sense, that is
for $\g > 0$ there exists a random variable $K=K(\w)$, $\w \in \Omega$,
 such that 
\begin{equation} \label{eq:asconv}
 \sup_{0\le t_{k}\le T} \lev x(t_{k}) - X_{t_{k}} \rev \le K(\w) \D t^{1- \g}, \quad
\hbox{for  } T,\g>0.
\end{equation}
\end{theorem}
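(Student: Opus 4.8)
The plan is to upgrade the global-Lipschitz $L^p$ rate of Theorem~\ref{th:strongkp} to an almost-sure rate by a localization argument combined with a Borel--Cantelli estimate, following \cite{gyongy1998note,jentzen2009pathwise}. Two ingredients drive the proof: (i) for each truncation radius $R>0$, a globally Lipschitz SDE whose solution and $(\o,\s)$-Milstein approximation coincide with the original ones as long as the trajectory remains in $[-R,R]$; and (ii) an abstract lemma converting $L^p$ bounds that are polynomially small in the step size into a pathwise rate along a dyadic sequence of step sizes. The effective order is $\a=1$, matching the strong order of the scheme.

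First I would construct the truncation. For $R>0$ I would modify $f$ and $g$ outside $[-R,R]$ to obtain $f_R,g_R$ that agree with $f,g$ on $[-R,R]$ and for which $f_R$, $g_R$ and $L^1g_R$ are globally Lipschitz, for instance by extending the coefficients affinely beyond $\pm R$. Crucially, the extension must be chosen so that the one-sided Lipschitz condition \eqref{as:mono1} and the monotonicity \eqref{as:diff1} of Assumption~\ref{as:one} are retained (affine extension with slope in $[-L,K]$ for $f$, and in $[0,M]$ for $L^1g$, does this); this guarantees, via Lemma~\ref{defbem}, that the truncated scheme remains uniquely solvable and is the genuine $(\o,\s)$-Milstein scheme for the truncated SDE. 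By Theorem~\ref{th:strongkp}, the truncated approximation $X^{R}$ then satisfies, for the truncated solution $x^R$,
\[
 \E\Big[\sup_{0\le t_k\le T}\lev x^R(t_k)-X^{R}_{t_k}\rev^{p}\Big]\le C(R,p)\,\D t^{p},\qquad p\ge 2.
\]

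Next I would invoke the Borel--Cantelli lemma of \cite{gyongy1998note,jentzen2009pathwise}: if nonnegative random variables $Z_n$ satisfy $\E\lev Z_n\rev^{p}\le c_p\,\D t_n^{\,p}$ for every $p$ along $\D t_n=2^{-n}$, then for every $\g>0$ there is an almost surely finite $K$ with $Z_n\le K\,\D t_n^{\,1-\g}$ for all $n$ (apply the Markov inequality with $p$ so large that $\g p>1$ and sum). Taking $Z_n=\sup_{0\le t_k\le T}\lev x^R(t_k)-X^{R}_{t_k}\rev$ yields the pathwise rate $1-\g$ for the truncated problem. To return to the original equation I would use path continuity: since \eqref{eq:SDE} has a strong solution, $\sup_{0\le t\le T}\lev x(t)\rev<\8$ a.s., so $\Omega=\bigcup_{R}\Omega_R$ up to a null set, where $\Omega_R=\{\sup_{t\le T}\lev x(t)\rev<R\}$. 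On $\Omega_R$ the true solution stays strictly inside $[-R,R]$, hence equals $x^R$; and because $X^R\to x^R$ pathwise, for $n$ large every $X^{R}_{t_k}\in(-R,R)$, where $f_R,g_R$ agree with $f,g$, so $X^{R}_{t_{k+1}}$ also solves the original implicit equation $F(X_{t_{k+1}})=b(X_{t_k})$. Uniqueness of that solution (Lemma~\ref{defbem}) then forces $X_{t_k}=X^{R}_{t_k}$ for all $k$ by induction, transferring the rate to the original scheme on each $\Omega_R$ and establishing \eqref{eq:asconv} with $K=K(\w)$.

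The main obstacle is the apparent circularity in this coincidence step: identifying the original scheme with the truncated one requires knowing the numerical trajectory does not leave $[-R,R]$, yet that containment is only available once the \emph{pathwise} convergence of the truncated scheme has been proved. The resolution is to establish the truncated pathwise rate first (it uses only the globally Lipschitz Theorem~\ref{th:strongkp} and the Borel--Cantelli lemma), and then exploit continuity of the limit $x^R=x$ on $\Omega_R$ to confine $X^{R}$ inside the region, after which uniqueness of the implicit step gives $X=X^R$ for all large $n$. A secondary technical point is the truncation itself, which must simultaneously render $f_R,g_R,L^1g_R$ globally Lipschitz and preserve the monotone structure of Assumption~\ref{as:one} needed for solvability of the implicit step; and since the estimate is obtained along $\D t_n=2^{-n}$, intermediate step sizes in \eqref{eq:asconv} are recovered by monotonicity of the bound in $\D t$.
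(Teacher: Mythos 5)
Your proposal is correct and is, in substance, the same localization argument that the paper's proof defers to: the paper's entire proof consists of citing the procedure of \cite{gyongy1998note,jentzen2009pathwise} and recording the one modification it deems essential, namely an $\F_{t}$-adapted continuous interpolant of the implicit scheme built from $F^{-1}$. You actually carry the localization out --- a Lipschitz truncation chosen to preserve the monotone structure so that Lemma \ref{defbem} still yields a well-defined truncated scheme, the $L^{p}$ rate of Theorem \ref{th:strongkp}, a Markov/Borel--Cantelli upgrade to a pathwise rate, and an a-posteriori identification of the truncated and original schemes through uniqueness of the implicit step --- which is considerably more than the paper writes down. The one genuine divergence is that you dispense with the continuous extension entirely: because the claim is only at the grid points $t_{k}$, your order of argument (first establish the pathwise rate for the truncated scheme, then use it to confine the numerical trajectory to $(-R,R)$ and invoke uniqueness of $F(y)=b$) replaces the stopping-time construction for which the interpolant is usually introduced, and this is sound. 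Two small repairs are needed: the extension of $g$ beyond $\pm R$ must match both $g$ and $g'$ there so that $L^{1}g_{R}$ is defined, Lipschitz and still nondecreasing (a purely affine cut-off of $g$ alone does not control $L^{1}g_{R}$); and the closing remark that non-dyadic step sizes follow ``by monotonicity of the bound in $\D t$'' is not correct as stated, since the error for an intermediate step size is a different random variable not dominated by the dyadic ones --- the standard fix is simply to run the Borel--Cantelli estimate over all $\D t = T/N$, $N\in\NN$, which the summability $\sum_{N} N^{-\g p}<\8$ for $\g p>1$ accommodates without change.
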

\begin{proof}
 A proof can be written in an analogous way 
  to the proof of Theorem~1 in \cite{jentzen2009pathwise},  
or Theorem~2.3 in \cite{gyongy1998note}. 
A key difference is that those 
authors used explicit schemes and 
defined continuous extensions 
to overcome some technical difficulties. 
In our setting, we need to define an
$\F_{t}$-adapted continuous extension of the approximation \eqref{eq:os-Milstein}.
Using notation of Lemma 2.4 we can write  
\eqref{eq:os-Milstein} in the form
\begin{align*} 
X_{t_{k+1}}& = F^{-1} \Biggl( X_{t_{k}}
                 + (1-\o) f(X_{t_{k}})\D t +
                 g(X_{t_{k}})\D w_{t_{k}}
                  + \frac{1}{2}L^{1}g(X_{t_{k}})\D w_{t_{k}}^{2} 
             - \frac{(1-\s)}{2}L^{1}g(X_{t_{k}}) \D t \Biggr).               
 \end{align*}
Then a suitable continuous extension of \eqref{eq:os-Milstein} for $t \in[t_{k}, t_{k+1})   $ could be defined by 
\begin{align*} 
X(t)& = F^{-1} \Biggl( X_{t_{k}}
                 + (1-\o) f(X_{t_{k}})(t-t_{k}) +
                 g(X_{t_{k}}) ( w(t) - w(t_{k} ) \\
                & + \frac{1}{2}L^{1}g(X_{t_{k}}) ( w(t) - w(t_{k} )^{2}    
             - \frac{(1-\s)}{2}L^{1}g(X_{t_{k}}) (t-t_{k})\Biggr).               
 \end{align*}
\end{proof}
In order to show that we also have strong convergence we need to show 
that the solution to \eqref{eq:os-Milstein} has bounded moments. 
We will prove boundedness of the moments under the 
following assumption. 
\begin{assp} \label{a0}
\textit{\underline{Monotone-type condition}.} There exist constants $a$ and $b$ such that
\begin{align} \label{eq:moments}
&2 x f(x) + \lev g(x)\rev^{2}+(1-2\o)\lev f(x)\rev^{2} \D t  \nonumber  \\
&+\frac{\D t}{2} L^{1}g(x) (2 \s f(x)+ L^{1}g(x) )
\leq a + b \lev x \rev^{2} \quad \quad \hbox{for all } x\in\RR. 
\end{align}
\end{assp}
The 
following lemma establishes a useful relation between function $F(x)$ defined in \ref{eq:Fdef} and its argument $x$. 
\begin{lemma} \label{lem:FF}
 Lets Assumptions \ref{as:one} and \ref{a0} hold. Then there exist constants $c_{1},c_{2}>0$ such that
\[
 \lev F(x) \rev^2 \ge c_{1}\lev x\rev^{2} - c_{2}\D t \quad \hbox{for } x\in\RR.
\]
\end{lemma}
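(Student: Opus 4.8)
The plan is to exploit the fact that the left-hand side of the monotone-type condition \eqref{eq:moments} is, up to sign and a factor $\o\D t$, precisely the expression that surfaces when one expands $\lev F(x)\rev^{2}$; this is the very computation already carried out in the proof of Theorem~\ref{th:Stabilty}, only now fed with Assumption~\ref{a0} in place of the stability bound. Recall from \eqref{eq:Fdef} that $F(x)=x-\o f(x)\D t+\frac{\s}{2}L^{1}g(x)\D t$. First I would square $F(x)$, producing six terms, and regroup them so as to isolate $\o\D t$ times (minus) the left-hand side of \eqref{eq:moments}, collecting the remaining genuinely quadratic-in-$\D t$ pieces into a non-negative remainder.

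A direct expansion and collection of powers of $\D t$ yields the identity
\[
\lev F(x)\rev^{2}=\lev x\rev^{2}+\s x L^{1}g(x)\D t-\o\D t\,Q(x)+R(x),
\]
where $Q(x)$ denotes the left-hand side of \eqref{eq:moments} and
\[
R(x)=\o(1-\o)\lev f(x)\rev^{2}\D t^{2}+\left(\frac{\s^{2}}{4}+\frac{\o}{2}\right)\lev L^{1}g(x)\rev^{2}\D t^{2}+\o\lev g(x)\rev^{2}\D t .
\]
Since $0\le\o\le1$ and $0\le\s\le1$, every term of $R(x)$ is non-negative, so it may be discarded, leaving $\lev F(x)\rev^{2}\ge\lev x\rev^{2}+\s x L^{1}g(x)\D t-\o\D t\,Q(x)$.

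Next I would insert the two structural bounds. Assumption~\ref{a0} gives $Q(x)\le a+b\lev x\rev^{2}$, hence $-\o\D t\,Q(x)\ge-\o a\D t-\o b\lev x\rev^{2}\D t$. For the cross term I would use the linear lower bound \eqref{eq:lowerbound} from Remark~\ref{rmk}, namely $xL^{1}g(x)\ge xL^{1}g(0)$, and then the Young inequality to absorb the sign-indefinite quantity $xL^{1}g(0)$:
\[
\s x L^{1}g(x)\D t\ge\s x L^{1}g(0)\D t\ge-\frac{\D t}{2}\lev x\rev^{2}-\frac{\D t}{2}\s^{2}\lev L^{1}g(0)\rev^{2}.
\]
Combining these gives $\lev F(x)\rev^{2}\ge\left(1-(\o b+\frac12)\D t\right)\lev x\rev^{2}-\left(\o a+\frac12\s^{2}\lev L^{1}g(0)\rev^{2}\right)\D t$, which is exactly the claimed form once the two coefficients are named $c_{1}$ and $c_{2}$.

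The one point needing care, and the only real obstacle, is guaranteeing $c_{1}>0$. Unlike in Theorem~\ref{th:Stabilty}, where the analogous term had a definite sign ($-Q(x)\ge z(x)\ge0$) and could simply be dropped, here $Q(x)$ is only bounded \emph{above} by $a+b\lev x\rev^{2}$, so the contribution $-\o b\lev x\rev^{2}\D t$ genuinely enters the coefficient of $\lev x\rev^{2}$ and may be negative when $b>0$. I would therefore restrict the step size, e.g. to $\D t\le(2\o b+1)^{-1}$ when $b>0$ (no further restriction being needed when $b\le0$), so that $1-(\o b+\frac12)\D t\ge\frac12=:c_{1}$; one then sets $c_{2}=\o a+\frac12\s^{2}\lev L^{1}g(0)\rev^{2}$, replaced by a strictly positive value if $a\le0$. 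This keeps $c_{1},c_{2}>0$ uniformly in $x$, as required.
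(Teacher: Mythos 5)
Your proof is correct and follows essentially the same route as the paper: expand $\lev F(x)\rev^{2}$ so that the left-hand side of \eqref{eq:moments} appears (the computation already used in Theorem \ref{th:Stabilty}), apply Assumption \ref{a0}, bound the cross term via \eqref{eq:lowerbound} and Young's inequality, and conclude with a step-size restriction ensuring $c_{1}>0$. You are in fact slightly more careful than the paper on the last point, since the paper invokes \eqref{eq:time_re} to get $c_{1}>0$ (which implicitly identifies the constant $b$ of Assumption \ref{a0} with the $b=(2K+1)/2$ of Remark \ref{rmk}), and on the sign of $c_{2}$, where the paper's stated value contains a sign slip.
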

\begin{proof}
 By Assumptions \ref{as:one} and \ref{a0} we have 
\begin{equation} \label{eq:estiamtes}
\begin{split}
 \lev F(x) \rev^{2}  &
\ge   \lev x \rev^{2} - \o a \D t - \o b \lev x \rev^{2} \D t + \s x L^{1}g(x) \D t  \\
& \ge \lev x \rev^{2} - \o a \D t - \o b \lev x \rev^{2} \D t - \s  \lev x \rev \lev L^{1}g(0)\rev \D t\\
& \ge \lev x \rev^{2} - \o a \D t - \o b \lev x \rev^{2} \D t -   \o/2\lev x \rev^2 \D t - \s^{2}/(2\o)\lev L^{1}g(0)\rev^{2} \D t \\
&\ge  (1-(\o b + \o/2) \D t )\lev x \rev^{2} - \o a \D t - \s^{2}/(2\o)\lev L^{1}g(0)\rev^{2} \D t,
\end{split}
 \end{equation}
and we take $c_{1} = (1-(\o b + \o/2) \D t )$ and $c_{2}= - \o a  + \s^{2}/(2\o)\lev L^{1}g(0)\rev^{2}$.
Due to \eqref{eq:time_re} $c_{1}>0$.
\end{proof}
Our analysis uses a 
 localization procedure.
We define the stopping time $\l_{m}$ by
\begin{equation} \label{sd}
\lambda _{m}=\inf \{k: \lev X_{t_{k}}\rev > m \}.
\end{equation}
We observe that when $k \in[0,\l_{m}(\w)]$,
$\lev X_{t_{k-1}}(\w) \rev \le m$,
but we might have that $\lev X_{t_{k}} (\w)   \rev > m$,
so the following lemma is not trivial.
\begin{lemma} \label{stopping}
Let Assumptions \ref{as:one} and \ref{a0} hold. Then for $p\geq 2$ 
and sufficiently large integer $m$, there exists a
constant $K=K(p,m)$, such that
\begin{equation*}
\E\left[\lev X_{t_{k}}\rev ^{p}\1_{[0,\lambda
_{m}]}(k)\right]<K \quad \mathrm{~~for~any~~}k\ge0.
\end{equation*}
\end{lemma}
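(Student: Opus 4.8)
The plan is to work not with $X_{t_k}$ directly but with $F(X_{t_k})$, the function from Lemma~\ref{defbem}, exploiting the fact that although the scheme is implicit, $F(X_{t_{k+1}})$ is given \emph{explicitly} in terms of $X_{t_k}$ and the Brownian increment. Indeed, as derived in the proof of Theorem~\ref{th:Stabilty},
\[
F(X_{t_{k+1}}) = F(X_{t_{k}}) + f(X_{t_{k}})\D t + g(X_{t_{k}})\D w_{t_{k}} + \frac{1}{2}L^{1}g(X_{t_{k}})(\D w_{t_{k}}^{2} - \D t).
\]
This identity is the crux: it controls $F(X_{t_{k+1}})$ through data measured at step $k$, which is exactly where the stopping time keeps us inside a bounded region. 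Once $|F(X_{t_k})|^p$ is under control, Lemma~\ref{lem:FF} returns us to $X_{t_k}$: the bound $|F(x)|^{2}\ge c_{1}|x|^{2}-c_{2}\D t$ gives $|x|^{p}\le C(|F(x)|^{p}+\D t^{\,p/2})$ for $p\ge2$ after raising to the power $p/2$, and it is here that Assumption~\ref{a0} enters, through the constants $c_1,c_2$.

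First I would record the localization structure. By the definition of $\lambda_m$ in \eqref{sd}, the event $\{k\le\lambda_m\}=\{\lambda_m\ge k\}$ is $\F_{t_{k-1}}$-measurable, and on it $|X_{t_{j}}|\le m$ for every $j\le k-1$; in particular $|X_{t_{k-1}}|\le m$. Since $f$, $g$ and $L^{1}g$ are continuous by Assumption~\ref{as:one}, they are bounded on the compact set $\{|x|\le m\}$ by a constant $C_m$, whence $|F(X_{t_{k-1}})|$ is likewise bounded by a constant depending only on $m$ and $\D t$. The increment $\D w_{t_{k-1}}=w(t_k)-w(t_{k-1})$ is independent of $\F_{t_{k-1}}$ and has all moments finite, with $\E|\D w_{t_{k-1}}|^{p}\le C_p\D t^{\,p/2}$ and $\E|\D w_{t_{k-1}}^{2}-\D t|^{p}\le C_p\D t^{\,p}$.

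The main step is then a single conditional estimate. Applying the identity above with $k$ replaced by $k-1$, taking the conditional expectation given $\F_{t_{k-1}}$ and restricting to $\{k\le\lambda_m\}$, every coefficient appearing is evaluated at the bounded point $X_{t_{k-1}}$, so that
\[
\1_{\{k\le\lambda_m\}}\,\E\!\left[\,|F(X_{t_{k}})|^{p}\mid\F_{t_{k-1}}\right]\le C(p,m,\D t)\qquad\text{uniformly in }k.
\]
Taking expectations and using $\PP(k\le\lambda_m)\le1$ gives $\E[\,|F(X_{t_{k}})|^{p}\1_{\{k\le\lambda_m\}}]\le C(p,m,\D t)$, and Lemma~\ref{lem:FF} with $p\ge2$ then yields $\E[\,|X_{t_{k}}|^{p}\1_{\{k\le\lambda_m\}}]\le K(p,m)$. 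The case $k=0$ is immediate as soon as $m\ge|x(0)|$, which is what ``sufficiently large $m$'' means.

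The main obstacle is the one flagged just before the statement: on $\{k\le\lambda_m\}$ we only know $|X_{t_{k-1}}|\le m$, whereas $X_{t_{k}}$ itself may overshoot $m$ precisely at $k=\lambda_m$, and the implicitness of the scheme prevents us from reading $X_{t_{k}}$ off a bounded explicit increment. The resolution is never to bound $X_{t_k}$ through the implicit relation; instead we bound the explicit quantity $F(X_{t_{k}})$, which is driven only by the controlled value $X_{t_{k-1}}$ and the Gaussian increment, and recover $X_{t_{k}}$ a posteriori from the coercivity-type lower bound of Lemma~\ref{lem:FF}. Handling this interchange correctly --- conditioning on $\F_{t_{k-1}}$, using that $\{k\le\lambda_m\}$ is $\F_{t_{k-1}}$-measurable, and keeping the constant independent of $k$ --- is the only genuinely delicate point; the remaining estimates are routine moment bounds for the Gaussian increments.
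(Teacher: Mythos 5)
Your proposal is correct and follows essentially the same route as the paper: both arguments exploit the fact that $F(X_{t_{k}})$ is given explicitly by data at step $k-1$, use the $\F_{t_{k-1}}$-measurability of $\{k\le\lambda_m\}$ so that all coefficients are evaluated at the bounded point $X_{t_{k-1}}$, bound the Gaussian increments by their finite moments, and then recover $|X_{t_k}|^p$ from $|F(X_{t_k})|^p$ via Lemma~\ref{lem:FF}. The only (immaterial) difference is that the paper starts from the squared identity \eqref{eq:F-ine} together with Assumption~\ref{a0} and raises to the power $p/2$, handling the martingale terms by Cauchy--Schwarz, whereas you raise the linear identity directly to the power $p$ and condition on $\F_{t_{k-1}}$.
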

\begin{proof}
By \eqref{eq:F-ine} and Assumption \ref{a0} we obtain 
\begin{align}
\lev F(X_{t_{k}})\rev^2
 \le & \lev F(X(t_{k-1})\rev^2 +  a \D t + b \lev X{t_{k-1} } \rev^{2}  \D t +  \D m_{k} , \nonumber
\end{align}
where $\D m_{k+1}$ is defined by \eqref{eq:martingle}.
Using the basic inequality $(a_{1} + a_{2} + a_{3}+ a_{4})^{p/2}\le 4^{p/2-1} ( a_{1}^{p} + a_{2}^{p} + a_{3}^{p} + a_{4}^{p}   ) $,
where $a_{i}\ge0$, we obtain
\begin{align}
\lev F(X_{t_{k}})\rev^p
 \le & 4^{p-1} \left( \lev F(X_{t_{k-1}}\rev^p +  (a \D t)^{p/2}
 + b \lev X_{t_{k-1} } \rev^{p}  \D t + \lev \D m_{k} \rev^{p/2} \right).
\end{align}
As a consequence
\begin{align*}
\E \left[ \lev F(X_{t_{k}})\rev^p \1_{[0,\l_{m}]}(k) \right]
 \le & 4^{p-1} \biggl( \E \left[ \lev F(X_{t_{k-1}}\rev^p \1_{[0,\l_{m}]}(k) \right]  +  (a \D t)^{p/2} \\
 & + b  m^{p}  \D t + \E \left[ \lev \D m_{k} \rev^{p/2} \1_{[0,\l_{m}]}(k) \right] \biggr).
\end{align*}
In order to bound $\E \left[ \lev \D m_{k} \rev^{p/2} \1_{[0,\l_{m}]}(k) \right]$ we need to consider 
all the terms of $\D m_{k} $ separately. 
By the Cauchy-Schwarz inequality  
\begin{align*}
& \E \biggl[   \lev g(X_{t_{k-1}}) \rev^{p} \lev \D w_{t_{k+1}}^{2} - \D t \rev^{p/2}  \biggr] \1_{[0,\l_{m}]}(k) \\
& \le   \biggl[   \bigl(\E\bigl[\lev g(X_{t_{k-1}}) \rev^{2p}\1_{[0,\l_{m}]}(k)\bigr]\bigr)^{1/2}
	\bigl(\E\lev \D w_{t_{k+1}}^{2} - \D t \rev^{p}\bigr)^{1/2}
           \biggr].
\end{align*}
Since there exists a positive constant $C(p)$,
such that $\E\lev\Delta w_{t_{k-1}}\rev^{2p}<C(p)$, there exists a constant $C(m,p)$ such that 
\begin{align*}
& \E \biggl[   \lev g(X_{t_{k-1}}) \rev^{p} \lev \D w_{t_{k+1}}^{2} - \D t \rev^{p/2}  \biggr] \1_{[0,\l_{m}]}(k) 
\le C(m,p).
\end{align*}
In the same way we can bound all the other terms of $\D m_{k}$. 
Hence
\begin{align*}
\E\left[\lev F (X_{t_{k}} ) \rev ^{p}\1_{[0,\l_{m}]}(k)\right]<C(m,p).
\end{align*}
Due to Lemma \ref{lem:FF} the proof is complete.
\end{proof}
In addition to Assumption \ref{a0} we require the following very mild restriction on the coefficients 
of the SDE. 
\begin{assp}\label{as:polynomial}
The coefficients of equation (\ref{eq:SDE})
satisfy a polynomial growth condition. That is,  there exists a pair of constants  $h\ge 1$ and $H>0$  such that
\begin{equation} \label{ass:P}
\lev f(x) \rev \vee \lev g(x)\rev \leq H ( 1 + \lev x \rev ^{h} ), 
\qquad \forall x.
\end{equation}
\end{assp}
Now we formulate the key theorem that allows us 
to prove a strong convergence result. 
\begin{theorem} \label{th:bound}
Let Assumptions \ref{as:one}, \ref{a0} and \ref{as:polynomial} hold.
Then there exists a constant $K=K(T)$ such that the 
$(\o,\s)$-Milstein scheme \eqref{eq:os-Milstein} satisfies 
\[
\sup_{0\le t_{k} \le T }\E \lev X_{t_{k}} \rev^{2} \le K.
\]
\end{theorem}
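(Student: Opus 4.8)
The plan is to turn the exact identity \eqref{eq:F-ine} for $\lev F(X_{t_{k+1}})\rev^2$, obtained in the proof of Theorem~\ref{th:Stabilty}, into a discrete Gronwall inequality for $\E\lev X_{t_k}\rev^2$. The key observation is that, after factoring out one power of $\D t$, the deterministic part of the increment in \eqref{eq:F-ine} is precisely the left-hand side of the monotone-type bound \eqref{eq:moments} in Assumption~\ref{a0}. Hence
\[
\lev F(X_{t_{k+1}})\rev^2 \le \lev F(X_{t_k})\rev^2 + (a + b\lev X_{t_k}\rev^2)\D t + m_{k+1},
\]
where $m_{k+1}$ is the local martingale difference \eqref{eq:martingle}. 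Taking expectations would remove $m_{k+1}$ and, combined with the lower bound $\lev F(x)\rev^2 \ge c_1\lev x\rev^2 - c_2\D t$ from Lemma~\ref{lem:FF}, would give a linear recursion in $\E\lev X_{t_k}\rev^2$. The difficulty is that, because $g$ and $L^1g$ grow only polynomially, we cannot yet assert that $m_{k+1}$ is integrable with zero mean; this is exactly where the stopping time $\l_m$ of \eqref{sd} and Lemma~\ref{stopping} are needed.

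First I would work with the stopped process $X_{t_{k\wedge\l_m}}$. Summing the telescoped increments of $\lev F(X_{t_{(j+1)\wedge\l_m}})\rev^2$, each increment vanishes for $j\ge\l_m$, while for $j<\l_m$ we have $\lev X_{t_j}\rev \le m$, so that $f,g,L^1g$ are bounded there (Assumption~\ref{as:polynomial}) and the stopped sum of martingale differences is a genuine martingale that disappears under expectation. Bounding the drift by \eqref{eq:moments} and applying Lemma~\ref{lem:FF} to the left-hand side yields
\[
c_1\,\E\lev X_{t_{(k+1)\wedge\l_m}}\rev^2 \le c_2\D t + \lev F(x(0))\rev^2 + \sum_{j=0}^{k}\bigl(a + b\,\E\lev X_{t_{j\wedge\l_m}}\rev^2\bigr)\D t,
\]
where I have used $\E[\1_{\{j<\l_m\}}\lev X_{t_j}\rev^2] \le \E\lev X_{t_{j\wedge\l_m}}\rev^2$.

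Writing $U^{(m)}_k = \E\lev X_{t_{k\wedge\l_m}}\rev^2$ and using $(k+1)\D t = t_{k+1}\le T$, this becomes
\[
U^{(m)}_{k+1} \le \frac{c_2\D t + \lev F(x(0))\rev^2 + aT}{c_1} + \frac{b}{c_1}\sum_{j=0}^{k}U^{(m)}_j\,\D t.
\]
A discrete Gronwall inequality then bounds $U^{(m)}_{k+1}$ by $K := c_1^{-1}\bigl(c_2\D t + \lev F(x(0))\rev^2 + aT\bigr)\exp(bT/c_1)$, and the crucial feature is that $K$ depends on neither $m$ nor $k$. Finally I would let $m\to\8$: since the scheme is well defined (Lemma~\ref{defbem}), each $X_{t_k}$ is a.s. finite, so $\l_m\uparrow\8$ a.s.\ and $X_{t_{k\wedge\l_m}}\to X_{t_k}$ a.s.; Fatou's lemma gives $\E\lev X_{t_k}\rev^2 \le \liminf_m U^{(m)}_k \le K$ for every $t_k\le T$, which is the claim.

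I expect the main obstacle to be the localization bookkeeping in the middle step, namely justifying that the stopped martingale has zero expectation and, more importantly, that every constant entering the Gronwall estimate stays independent of $m$, so that the bound survives the passage to the limit. The Gronwall estimate itself is routine once the recursion is in place.
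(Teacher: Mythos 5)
Your proposal is correct and follows essentially the same route as the paper's proof: the identity \eqref{eq:F-ine} combined with Assumption \ref{a0} gives the one-step inequality, the stopping time \eqref{sd} together with Lemma \ref{stopping} makes the martingale part vanish under expectation, Lemma \ref{lem:FF} converts between $\lev F(X)\rev^{2}$ and $\lev X\rev^{2}$, and the discrete Gronwall inequality plus Fatou's lemma (letting $m\to\8$) finishes the argument. The only cosmetic difference is that you run the Gronwall recursion on $\E\lev X_{t_{k\we\l_m}}\rev^{2}$ directly, whereas the paper iterates on $\E\lev F(X_{t_{k\we\l_m}})\rev^{2}$ and applies Lemma \ref{lem:FF} only at the end.
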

\begin{proof}
By \eqref{eq:F-ine} and Assumption \ref{a0} we arrive at
\begin{align} \label{eq:ineq}
\lev F(X_{t_{k+1}})\rev^2
 \le & \, \lev F(X_{t_{k}})\rev^2 +  a \D t + b \lev X_{t_{k} } \rev^{2}  \D t + \D m_{k+1},
\end{align}
where $\D m_{k+1}$ is defined by \eqref{eq:martingle}.
Let $N$ be any non-negative integer such that $N \D t \le T$.  
Summing both sides of
inequality \eqref{eq:ineq} from $k=0$ to $N\we\l_m$, we get
\begin{equation} \label{f1}
\begin{split}
\lev F( X_{t_{N\we\l_m+1}} ) \rev ^{2}
&\leq
\lev F( X_{t_{0}} )\rev ^{2} +a T
  + b \sum_{k=0}^{N\we\l_m} \lev X_{t_{k}} \rev^{2} \D t
       +\sum_{k=0}^{N\we\l_m} \D m_{k+1} \notag \\
     & \le
    \lev F( X_{t_{0}} )\rev ^{2} +a T
  + b \sum_{k=0}^{N} \lev X_{t_{k\we \l_{m}}} \rev^{2} \D t
       +\sum_{k=0}^{N} \D m_{k+1} \1_{[0,\lambda _{m}]}(k).
\end{split}
\end{equation}
Due to Lemma \ref{stopping} 
$ \sum_{k=0}^{N} \D m_{k+1} \1_{[0,\lambda _{m}]}(k)$ is a martingale. Hence 
\begin{align*}
  \E
 \lev F( X_{t_{N\we\l_m+1}} ) \rev^{2} \notag
   &\le
 \lev F( X_{t_{0}} ) \rev +a T + b \, \E
   \left[
   \sum_{k=1}^{N} \lev X_{t_{_{k\we \l_{m}}}} \rev^{2} \D t
   \right].
\end{align*}
Due to Lemma \ref{lem:FF} we have 
\begin{align*}
  \E
 \lev F( X_{t_{N\we\l_m+1}} ) \rev^{2} \notag
   &\le
 \lev F( X_{t_{0}} ) \rev +(a +c_{2}\,c_{1}^{-1})\,T   + b\,c_{1}^{-1} \E
   \left[
   \sum_{k=0}^{N} \lev F( X_{t_{_{k\we \l_{m}}}} ) \rev^{2} \D t
   \right].
\end{align*}
By the discrete Gronwall Lemma 
\begin{equation} \label{f1b}
 \E
    \lev F( X_{t_{N\we\l_m+1}} ) \rev^{2}
 \le
   \left[ \lev F( X_{t_{0}} ) \rev +(a +c_{2}\,c_{1}^{-1})\,T \right]
 \exp\left(b\,c_{1}^{-1}\,T\right),
  \end{equation}
where we used the fact that $N\D t\le T$. Thus, letting $m\rightarrow \infty $ in (\ref{f1b}) and applying
Fatou's lemma, we obtain 
\[
\ \E
    \lev F (X_{t_{N+1}} )\rev^{2}
 \le
 \left[ \lev F( X_{t_{0}} ) \rev +(a +c_{2}\,c_{1}^{-1})\,T \right]
 \exp\left(b\,c_{1}^{-1}\,T\right).
  \]
The final bound follows from Lemma \ref{lem:FF}.
\end{proof}
We are ready to prove a strong convergence result.
\begin{theorem} \label{th:strong}
Let Assumptions \ref{as:one}, \ref{a0} and \ref{as:polynomial} hold.
Then the $(\o,\s)$-Milstein scheme 
\eqref{eq:os-Milstein} strongly converges to the solution 
of the SDE \eqref{eq:SDE}, that is 
\begin{equation} \label{eq:Lpconv}
 \lim_{\D t \rightarrow 0 } \E \lev x(t_{k}) - X_{t_{k}}\rev^{p} =0 \quad \hbox{for } 0< p<2.
\end{equation}
\end{theorem}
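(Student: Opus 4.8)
The plan is to deduce the $L^p$-statement \eqref{eq:Lpconv} from the almost sure (pathwise) convergence proved above, upgrading it by a uniform integrability argument that rests on the uniform-in-$\D t$ second moment bound of Theorem \ref{th:bound}; no rate of convergence is needed, only qualitative almost sure convergence together with a uniformly square-integrable majorant. First I would record the almost sure convergence. The pathwise convergence theorem supplies, for a fixed $\g\in(0,1)$ and each $T>0$, a finite random variable $K(\w)$ with $\sup_{0\le t_k\le T}\lev x(t_k)-X_{t_k}\rev\le K(\w)\,\D t^{\,1-\g}$. Keeping $\g$ fixed and letting $\D t\to0$, the right-hand side tends to $0$ almost surely, so $\sup_{0\le t_k\le T}\lev x(t_k)-X_{t_k}\rev\to0$ a.s.; in particular $\lev x(t_k)-X_{t_k}\rev\to0$ a.s. at every grid point.

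Next I would assemble a uniform $L^2$ bound on the error. Theorem \ref{th:bound} gives $\sup_{0\le t_k\le T}\E\lev X_{t_k}\rev^2\le K$, uniformly in $\D t$. For the exact solution, sending $\D t\to0$ in the monotone-type condition \eqref{eq:moments} produces the classical monotone condition $2xf(x)+\lev g(x)\rev^2\le a+b\lev x\rev^2$, which with Assumption \ref{as:polynomial} yields $\sup_{0\le t\le T}\E\lev x(t)\rev^2<\8$ by the standard It\^{o}/Gronwall moment estimate. Then $\lev x(t_k)-X_{t_k}\rev^2\le2\lev x(t_k)\rev^2+2\lev X_{t_k}\rev^2$ furnishes a constant $C$, independent of $\D t$ and of the grid point, with $\E\lev x(t_k)-X_{t_k}\rev^2\le C$.

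Finally I would run the truncation step. Writing $D:=\lev x(t_k)-X_{t_k}\rev$, I split for a threshold $R>0$,
\[
\E D^p=\E\!\left[D^p\1_{\{D\le R\}}\right]+\E\!\left[D^p\1_{\{D>R\}}\right].
\]
Since $0<p<2$, on $\{D>R\}$ we have $D^p\le R^{\,p-2}D^2$, so the tail term is at most $R^{\,p-2}C$, which can be made $<\e/2$ for large $R$, uniformly in $\D t$. With such $R$ fixed, $D^p\1_{\{D\le R\}}\le R^p$ and $D^p\1_{\{D\le R\}}\to0$ almost surely by the first step, so dominated convergence gives $\E[D^p\1_{\{D\le R\}}]<\e/2$ for $\D t$ small. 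Hence $\limsup_{\D t\to0}\E D^p\le\e$ for every $\e>0$, which is \eqref{eq:Lpconv}.

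The main obstacle is concentrated in the uniform second moment control, and it is already discharged by Theorem \ref{th:bound}: the delicate point is bounding $\E\lev X_{t_k}\rev^2$ uniformly in $\D t$ despite the implicitness, through the surrogate quantity $\lev F(X_{t_k})\rev^2$ and the comparison $\lev F(x)\rev^2\ge c_1\lev x\rev^2-c_2\D t$ of Lemma \ref{lem:FF}. Granting that bound, the only genuine structural constraint is the exponent: the restriction $p<2$ is exactly what turns $D^{p-2}$ into a decreasing weight on the tail, so that the $L^2$ bound controls it. One should also check in the $L^2$ step that the limiting monotone condition really holds for the SDE at hand, but for the models of interest this is immediate.
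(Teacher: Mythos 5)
Your argument is correct and follows essentially the same route as the paper: almost sure convergence from the pathwise estimate, combined with the uniform second-moment bound of Theorem \ref{th:bound}, upgraded to $L^p$ convergence for $p<2$ via uniform integrability --- your explicit truncation at level $R$ is just an inline proof of the Vitali convergence theorem that the paper cites. If anything you are slightly more careful than the paper, which only asserts uniform integrability of $\lev X_{t_k}\rev^{2-\e}$ and leaves implicit the second-moment bound on the exact solution $x(t)$ that is also needed to control the error; your derivation of $\sup_{0\le t\le T}\E\lev x(t)\rev^{2}<\8$ from the limiting form of the monotone-type condition supplies that missing ingredient.
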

\begin{proof}
By \eqref{eq:asconv} the $(\o,\s)$-Milstein approximnation
\eqref{eq:os-Milstein}  $X_{t_{k}}$ converges to $x(t_{k})$ in probability (Theorem 2.2 in \cite{shiryaev1996probability}).
Theorem \ref{th:bound} implies that the sequence  $\{ \lev X_{t_{k}} \rev^{2-\e}\}_{t_{k}}$ is uniformly integrable 
(Lemma 2.3 in \cite{shiryaev1996probability}). 
Therefore by the Vitali convergence theorem  (Theorem 2.4 in \cite{shiryaev1996probability} )
the statement of the theorem  holds.
\end{proof}
In case where we can guarantee non-negativity of approximation, conditions required to prove Theorem
\ref{th:strong} can be significantly relaxed.  

\begin{assp} \label{a0plus}
\textit{\underline{Monotone-type condition on $\RR_{+}$}.} There exist constants $a$ and $b$ such that
\begin{align} 
&2 x f(x) + \lev g(x)\rev^{2}+(1-2\o)\lev f(x)\rev^{2} \D t  \nonumber  \\
&+\frac{\D t}{2} L^{1}g(x) (2 \s f(x)+ L^{1}g(x) )
\leq a + b \lev x \rev^{2} \quad \quad \hbox{for all } x\in\RR_{+}. 
\end{align}
\end{assp}

\begin{theorem} 
Let conditions required for existence of non-negative solution in Theorem \ref{th:Positivity} hold.
Then under Assumptions \ref{as:polynomial} and \ref{a0plus} the $(\o,\s)$-Milstein scheme 
\eqref{eq:os-Milstein} strongly converges to the solution 
of the SDE \eqref{eq:SDE}, that is 
\begin{equation} 
 \lim_{\D t \rightarrow 0 } \E \lev x(t_{k}) - X_{t_{k}}\rev^{p} =0 \quad \hbox{for } 0< p<2.
\end{equation}
\end{theorem}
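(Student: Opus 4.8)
The plan is to reproduce the three-step proof of Theorem~\ref{th:strong}---pathwise convergence, a uniform moment bound, and Vitali's theorem---while using non-negativity of the scheme to weaken every appeal to the global Assumption~\ref{a0} down to its half-line counterpart, Assumption~\ref{a0plus}. The starting point is that the hypotheses of Theorem~\ref{th:Positivity} (in its non-negative form) guarantee $\PP\{X_{t_k}\ge 0\}=1$ for every $k$, while Assumption~\ref{as:positivity} forces the exact solution to satisfy $x(t)\ge 0$. Hence $f$, $g$ and $L^1g$ are only ever evaluated at non-negative arguments, and it suffices for all the pointwise estimates behind the moment bound to hold on $\RR_+$.

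First I would re-prove the analogue of Lemma~\ref{lem:FF} on $\RR_+$. Its argument expands $\lev F(x)\rev^2$ and controls the resulting terms via the monotone-type inequality together with the lower bound $x L^1g(x)\ge x L^1g(0)$. The latter---previously \eqref{eq:lowerbound}, which rested on the global Assumption~\ref{as:one}---is recovered for $x\ge 0$ directly from the monotone condition~\eqref{as:diff} of Assumption~\ref{as:one-sided} by setting $y=0$. Feeding in Assumption~\ref{a0plus} in place of \eqref{eq:moments}, the identical chain of inequalities produces constants $c_1,c_2>0$ with $\lev F(x)\rev^2\ge c_1\lev x\rev^2-c_2\D t$ for all $x\ge 0$, which is all we need since $X_{t_k}\ge 0$.

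Next, the stopped estimate of Lemma~\ref{stopping} and the Gronwall argument of Theorem~\ref{th:bound} transfer verbatim: the identity \eqref{eq:F-ine} is exact, the bound on $2X_{t_k}f(X_{t_k})+\lev g(X_{t_k})\rev^2+\dots$ now comes from Assumption~\ref{a0plus} evaluated at the non-negative point $X_{t_k}$, and the Cauchy--Schwarz control of each term of the martingale difference \eqref{eq:martingle} uses only Assumption~\ref{as:polynomial} and the magnitude of the coefficients, never the sign of the argument. This yields $\sup_{0\le t_k\le T}\E\lev X_{t_k}\rev^2\le K$ under Assumptions~\ref{as:polynomial} and~\ref{a0plus} alone. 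The remaining two steps are then immediate: pathwise convergence \eqref{eq:asconv} was established by localization and needs neither Assumption~\ref{a0} nor~\ref{a0plus}, so $X_{t_k}\to x(t_k)$ in probability; the uniform $L^2$ bound makes $\{\lev X_{t_k}\rev^{2-\e}\}$ uniformly integrable; and Vitali's theorem delivers $\E\lev x(t_k)-X_{t_k}\rev^p\to 0$ for $0<p<2$.

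The one genuinely delicate point is the half-line version of Lemma~\ref{lem:FF}: one must verify that the sole place where the earlier proof used the global one-sided structure---the lower bound on $xL^1g(x)$---is legitimately recovered from $\RR_+$-monotonicity, and that the subsequent Young step absorbing $\s\lev x\rev\lev L^1g(0)\rev$ requires no information about negative $x$. Once this is confirmed, non-negativity of $X_{t_k}$ confines the whole analysis to $\RR_+$ and the passage from Assumption~\ref{a0} to Assumption~\ref{a0plus} is justified.
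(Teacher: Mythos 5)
Your proposal is correct and takes essentially the same route as the paper, whose proof consists of the single remark that the argument is analogous to Theorem~\ref{th:strong}; you have supplied precisely the details that make that analogy legitimate, namely that non-negativity of the scheme confines all pointwise estimates to $\RR_{+}$, that the bound $xL^{1}g(x)\ge xL^{1}g(0)$ used in Lemma~\ref{lem:FF} is recovered from the monotone condition of Assumption~\ref{as:one-sided}, and that the stopped moment bound, Gronwall step and Vitali argument then carry over unchanged with Assumption~\ref{a0plus} replacing Assumption~\ref{a0}.
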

\begin{proof}
The theorem can be proved in an analogous way to Theorem~\ref{th:strong}
\end{proof}

It is clear that 3/2-model \eqref{eq:heston} doest not satisfy Assumption \ref{a0}, but satisfies
Assumption \ref{a0plus} as long as $\a\ge \be^{2}/4$. These condition seems not be restrictive
as pointed out in \cite{goard2011stochastic}. 

\subsection{Numerical Experiment}
In order to estimate the rate of convergence we proceed with numerical experiments
for
(\ref{eq:heston}).
We focus on
the strong endpoint error,
$
e^{\mathrm{strong}}_{\D t}=\E \lev x(T)-X_{T}\rev
$, 
with $T=1$.
We used $\mu = 0.1$, $\alpha = 0.2$, $\beta = \sqrt{0.2}$ and $x(0) = 0.5$. 
We plot $e^{\mathrm{strong}}_{\D t}$ 
against $\D t$ on a log-log scale.
Error bars
representing $95\%$ confidence intervals are shown by circles, and a reference
line of slope $1$ is also given.
\begin{figure}[htb]
\begin{center}
 \includegraphics[scale=0.8]{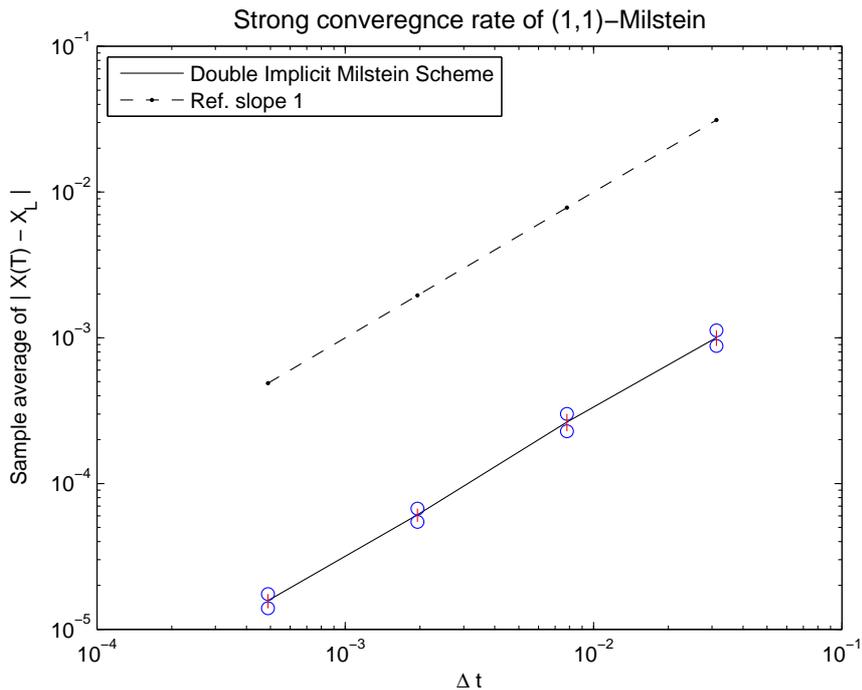}
\end{center}
        \caption{ Strong error of double-implicit Milstein scheme applied to
        Heston 3/2 Stochastic volatility model.
}
\label{fig:strong}
\end{figure}
Although we do not know the explicit form of the solution, Theorem \ref{th:strong} guarantees
that the (1,1)-Milstein scheme \eqref{eq:11heston} strongly converges to the true solution.
We therefore take the (1,1)-Milstein scheme
with $\D t = 2^{-14}$
as a reference solution. We compare this with the (1,1)-Milstein scheme evaluated with
$( 2\D t, 2^{3}\D t, 2^{5}\D t, 2^{7}\D t)$
in order to estimate the rate of convergence. Since we are using a Monte Carlo method,
the sampling error decays like $1/\sqrt{M}$, where $M= 10000$
is the number of sample paths. 
From Figure~\ref{fig:strong} we see that there appears to exist a positive constant $C$
such that
\begin{equation*}
e^{\mathrm{strong}}_{\D t}\le C \D t,  \quad \hbox{for sufficiently small $\D t$}.
\end{equation*}
A least squares fit for 
equality produced the value $1.1304$ for the rate
with residual of $0.2468$.  Hence, our results are consistent
with strong order of convergence equal to one.
\section{Conclusions}
Our aim was to introduce a new discretization scheme that 
can be shown to work well on highly nonlinear SDEs
arising in mathematical finance and to possess excellent 
linear and nonlinear 
stability properties.
There are several interesting areas for follow-up work; most notably
(a)
establishing a strong order of convergence for this method in 
a nonlinear setting, and 
(b) 
developing a theory of positivity preservation in the case of 
SDE systems and their numerical simulation.

\bibliographystyle{plain}
\bibliography{upthesis}

\end{document}